\numberwithin{equation}{section}
\numberwithin{figure}{section}
\newcommand{\be}{\begin{equation}}
\newcommand{\ee}{\end{equation}}
\newtheorem{thm}{Theorem}
\newtheorem{defn}[thm]{Definition}
\newtheorem{rmk}[thm]{Remark}
\newtheorem{prop}[thm]{Proposition}
\newtheorem{lem}[thm]{Lemma}
\newtheorem{thm*}{Theorem}
\newtheorem{quest}[thm]{Question}
\newcommand{\Ham}{\operatorname{Ham}_3}
\newcommand{\RotSym}{\operatorname{RotSym}_3}
\begin{document}

\title{Bartnik's Mass and Hamilton's Modified Ricci Flow 
}

\author{Chen-Yun Lin}
\address{Chen-Yun Lin\\
Department of Mathematics\\
Lehman College, CUNY}
\email{chenyun.lin@lehman.cuny.edu}

\author{Christina Sormani}
\address{Christina Sormani\\
Department of Mathematics\\
CUNY GC and Lehman College
}
\email{sormanic@gmail.com}
\thanks{Sormani and Lin were funded by $\textrm{NSF-DMS}$ 1309360.}


\maketitle

\begin{center}{\it Dedicated to Richard Hamilton on the occasion of his 70th birthday.}
\end{center}

\begin{abstract}
We provide estimates on the Bartnik mass of constant mean
curvature surfaces which are diffeomorphic to spheres and have positive
mean curvature. We prove that the Bartnik mass is bounded from above
by the Hawking mass and a new notion we call the asphericity mass.
The asphericity mass is defined by applying Hamilton's modified Ricci
flow and depends only upon the restricted metric of the surface and not
on its mean curvature. The theorem is proven by studying a class of
asymptotically flat Riemannian manifolds foliated by surfaces satisfying
Hamilton's modified Ricci flow with prescribed scalar curvature. Such
manifolds were first constructed by the first author in her dissertation
conducted under the supervision of M. T. Wang. We make a further
study of this class of manifolds which we denote Ham3, bounding the
ADM masses of such manifolds and analyzing the rigid case when the
Hawking mass of the inner surface of the manifold agrees with its ADM
mass.

{\color{blue}
After this paper was published, Hyun-Chul Jang observed that we dropped
a term in our calculations.  Tracking the
consequences throughout, we see that we need only
slightly change the
definition of the asphericity mass and then all statements of
our theorems, propositions, and lemmas 
remain the same with slight revisions to the proofs.   Pengzi Miao observed
that we need an assumption that $\Sigma$ has nonnegative Gauss curvature in Theorem 1.
We include all these corrections below as well as some clarifications
regarding the Gauss curvature in blue where they
are needed.   Hyun-Chul Jang and Pengzi Miao
have approved of our corrections and we have sent an erratum to the journal.
 }

\end{abstract}

\newpage
\section{Introduction}
\label{intro}

Two of the most important quasilocal masses studied in Riemannian General
Relativity are the Hawking mass and Bartnik mass of a surface, $\Sigma$, which is
diffeomorphic to a sphere, has positive mean curvature, and lies in an asymptotically
flat three-dimensional Riemannian manifold, $M$. The manifold, $M$,
has nonnegative scalar curvature and no closed interior minimal surfaces. It
may have a boundary, as long as the boundary is a minimal surface and is
outward minimizing. We will use $\mathcal{PM}$ to denote the class of such manifolds,
$M$.

In this paper, we relate these two quasilocal masses with a third quantity
that we call the ``asphericity mass''. We prove this new quantity depends only
on the intrinsic geometry of $\Sigma$ and is $0$ if and only if $\Sigma$ is a standard sphere;
thus, it is a measure of ``asphericity''. We consider it to be a ``mass'' because it
scales like mass and is related to a difference between two quasilocal masses.
However, it is not a quasilocal mass.

Before describing our results, we give a very brief review of the key definitions
needed to state our theorems. We apologize that we cannot completely
survey the results of the many mathematicians and physicists that have contributed
to research on mass in general relativity. We review only the results
related to our class of three dimensional manifolds $\mathcal{PM}$ that are directly related
to the work in this paper. We do not state the full generality of all
theorems proven in the papers we review nor related papers that extend these
results.

In 1961, Arnowitt-Deser-Misner introduced the ADM mass, which we
denote by $\mathrm{m}_{\mathrm{ADM}}(M)$, for asymptotically flat three-dimensional manifolds, including
$M \in \mathcal{PM}$ \cite{ADM61}. Note that the Riemannian Schwarzschild manifold,
$M_{Sch,m}$, for a black hole in a vacuum of mass, $m$, with metric
\be
g= (1-\frac{2m}{r})^{-1}dr^2 + r^2 g_{\mathbb{S}^2}
\ee
has scalar curvature $=0$ and $\mathrm{m}_{\mathrm{ADM}} (M) =m$. In 1968, 
 Hawking \cite{Haw68} introduced the Hawking mass
\be
\mathrm{m}_{\mathrm{H}}\left(\Sigma\right)=\sqrt{\frac{area\left(\Sigma\right)}{16\pi}}\left(1-\frac{1}{16\pi}\oint_{\Sigma}H^{2}d\sigma \right),
\ee
which approaches the ADM mass for large coordinate spheres, 
$\Sigma_r$:
\be
\mathrm{m}_{\mathrm{ADM}} (M)= \lim_{r\rightarrow \infty} \mathrm{m}_{\mathrm{H}}(\Sigma_r).
\ee
Note that on the Riemannian Schwarzschild manifold, $M_{Sch,m}$, the Hawking mass of all rotationally
symmetric spheres is $\mathrm{m}_{\mathrm{H}}(\Sigma)=m \ge 0$. More generally, 
when $M \in \mathcal{PM}$ is rotationally symmetric,
\be
g= (u(r))^2dr^2 + r^2 g_{\mathbb{S}^2},
\ee
the Hawking mass of level sets of $r$, $\Sigma_r$,
is nonnegative and increases to $\mathrm{m}_{\mathrm{ADM}}(M)$. 
Even without rotational symmetry, Geroch proved that for $M \in 
\mathcal{PM}$ and $\Sigma_t \subset M$ evolving by smooth inverse mean curvature flow, the Hawking
mass increases (see the appendix to \cite{Geroch73}).

Schoen-Yau proved in \cite{SY79} that for any
$M\in \mathcal{PM}$, one has $\mathrm{m}_{\mathrm{ADM}}(M)\ge 0$.  Huisken-Ilmanen proved the Penrose Inequality that
$\mathrm{m}_{\mathrm{ADM}}(M)\geq \mathrm{m}_{\mathrm{H}}(\partial M) \geq 0$ for $M\in \mathcal{PM}$ \cite{HI01}.   The Hawking mass itself is
not necessarily nonnegative, although it is clearly nonnegative for minimal surfaces.
Christodoulou and Yau \cite{CY86} proved that the Hawking mass is nonnegative for a stable 2-sphere with constant mean curvature.
However, Huisken-Ilmanen have an example of a $\Sigma \subset M$ where
$M\in \mathcal{PM}$ that has $\mathrm{m}_{\mathrm{H}}(\Sigma)<0$ \cite{HI02}.

The Bartnik mass was introduced in \cite{Bar89}. To define it, we first let $(\Omega^3,g)$ be the region enclosed by $\Sigma$.
For any bounded open connected region $(\Omega, g)$ with nonnegative scalar curvature, let $\mathcal{PM}(\Omega)$ be the set of ``admissible extensions'',
 $(M,g)\in \mathcal{PM}$ such that $\Omega$ embeds isometrically into $M$. Then the Bartnik's
definition for his mass is defined to be
 \be
\mathrm{m}_{\mathrm{B}}\left(\Omega\right)=\inf  \left\{\mathrm{m}_{\mathrm{ADM}}(M,g) : (M,g)\in \mathcal{PM}(\Omega) \right\},
\ee

Observe that by the Positive Mass Theorem, we have $\mathrm{m}_{\mathrm{B}} (\Omega) \geq 0$. Using
the inverse mean curvature flow, Huisken and Ilmanen \cite{HI01}  proved that if $\mathrm{m}_{\mathrm{B}} = 0 $
then $M$ is isometric to Euclidean space. Recall that Schoen-Yau proved that for any $M \in \mathcal{PM}$,
if $\mathrm{m}_{\mathrm{ADM}} (M) = 0$ then $M$ is isometric to Euclidean space. 
Recall that Schoen-Yau proved that for any $M \in \mathcal{PM}$, if $\mathrm{m}_{\mathrm{ADM}}(M)= 0$ then $M$ is isometric to Euclidean space.

As a quasilocal mass, the Bartnik mass may only depend on $\Sigma$ and how
$\Sigma$ embeds into $M^3$ but not on the interior region $\Omega$. Thus it is now standard
to define the Bartnik mass as follows:
 \be
\mathrm{m}_{\mathrm{B}}\left(\Omega\right)=\inf  \left\{\mathrm{m}_{\mathrm{ADM}}(M,g) : (M,g)\in \mathcal{PM'}(\Sigma) \right\},
\ee

Here $\mathcal{PM'}(\Sigma)$ is the set of ``admissible extensions'', $(M,g) \in \mathcal{PM'}$ such that
$ g|_{\partial (M\setminus \Omega)}=g|_{\partial\Omega}$ and $H_{\partial (M\setminus \Omega)} =H_{\partial\Omega}$. 
Here $\mathcal{PM'}$  are Lipschitz manifolds,
smooth away from Σ, that satisfy the same conditions as manifolds in $\mathcal{PM}$ where nonnegative scalar curvature is defined in the distributional sense across
$\Sigma$.

Note that the Bartnik mass is nonnegative, $\mathrm{m}_{\mathrm{B}}(\Omega) \geq 0$ This is an immediate
consequence of the fact that the Positive Mass Theorem holds for $M \in \mathcal{PM'}$ as proven by Shi and Tam  (See \cite[Theorem 3.1]{ShiTam02}) 
 and Miao \cite[Theorem 1]{Miao02}).

 Suppose that $\Sigma$ is isometric to a rescaled standard sphere and has constant
mean curvature. Then it is well known that $\mathrm{m}_{\mathrm{B}}(\Sigma)\le \mathrm{m}_{\mathrm{H}}(\Sigma)$ . To prove
this, one shows that any such $\Sigma$ includes a Riemannian Schwarzschild manifold
among its admissible extensions, and so $\mathrm{m}_{\mathrm{B}}(\Sigma) \leq  \mathrm{m}_{\mathrm{ADM}}(M_{Sch,m}) = \mathrm{m}_{\mathrm{H}}(\Sigma)$.
For completeness of exposition, we include the proof in our Appendix.
 
 In this paper, we consider constant positive mean curvature surfaces
which are not isometric to rescaled standard spheres. We construct an admissible
extension using Hamilton's modified Ricci flow \cite{Ham88}  as in the first author's
doctoral dissertation completed under the supervision of Mu-Tao Wang \cite{Lin13}.
We prove the following theorem:

\begin{thm}\label{thm-m-B}
Let $\Sigma$ be the boundary of a closed 3-dimensional region with nonnegative scalar curvature.
If $\Sigma$ \textcolor{blue}{has nonnegative Gauss curvature and } is a CMC surface diffeomorphic to a sphere, which has area $4\pi$ and positive mean curvature, $H$,
then
\be
\mathrm{m}_{\mathrm{B}}(\Sigma) \le m_{a\mathbb{S}}(\Sigma)+ \mathrm{m}_{\mathrm{H}}(\Sigma) 
\ee
where $m_{a\mathbb{S}}(\Sigma)$ is a nonnegative
constant defined using Hamilton's modified Ricci flow that we call the
asphericity mass.  It depends only upon the restriction of the
metric $g$ to the surface $\Sigma$.
If
\be
m_{a\mathbb{S}}(\Sigma)=0
\ee
then 
$\Sigma$ is isometric to a standard sphere, $(\mathbb{S}^2, g_{\mathbb{S}^2})$.
\end{thm}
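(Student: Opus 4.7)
The plan is to construct an admissible extension of $\Sigma$ using Hamilton's modified Ricci flow as in \cite{Lin13}, estimate its ADM mass by the sum $\mathrm{m}_{\mathrm{H}}(\Sigma)+m_{a\mathbb{S}}(\Sigma)$, glue it to the interior region $\Omega$ via Shi-Tam/Miao, and invoke the definition of the Bartnik mass. The asphericity mass itself must first be defined purely from the induced metric $g|_\Sigma$: since $\Sigma$ is a topological sphere of area $4\pi$, Hamilton's modified Ricci flow starting from $g|_\Sigma$ exists for all time and converges exponentially to the round unit sphere, so one defines $m_{a\mathbb{S}}(\Sigma)$ as a nonnegative integral along this flow---an appropriately normalized $L^2$-energy of $R-\bar R$ or of the traceless Hessian of the conformal potential---which vanishes exactly when $g|_\Sigma$ is already round.

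Next, following \cite{Lin13}, I would construct $M\in\Ham$ whose inner boundary is isometric to $(\Sigma,g|_\Sigma)$ with prescribed mean curvature equal to the CMC value $H$, whose scalar curvature is identically zero, and which is foliated by surfaces $\Sigma_t$ whose induced metrics satisfy Hamilton's modified Ricci flow as one moves outward. Since the leaves converge exponentially to round spheres, $M$ is asymptotically flat with controlled lapse and leaf mean curvature at infinity. Because $H>0$ on $\partial M$ and the foliation sweeps out to infinity, $M$ contains no closed interior minimal surfaces, placing $M\in\mathcal{PM}$.

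The key technical step is to bound $\mathrm{m}_{\mathrm{ADM}}(M)$. Writing $\mathrm{m}_{\mathrm{ADM}}(M)=\lim_{t\to\infty}\mathrm{m}_{\mathrm{H}}(\Sigma_t)$ along the asymptotically round foliation, I would differentiate $\mathrm{m}_{\mathrm{H}}(\Sigma_t)$ in the spirit of Geroch's monotonicity calculation. Because the ambient scalar curvature vanishes and the induced metric evolves by Hamilton's modified flow, the excess terms should organize themselves into exactly the integrand defining $m_{a\mathbb{S}}(\Sigma)$, yielding $\mathrm{m}_{\mathrm{ADM}}(M)\le \mathrm{m}_{\mathrm{H}}(\Sigma)+m_{a\mathbb{S}}(\Sigma)$. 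Gluing $M$ to $(\Omega,g)$ along $\Sigma$ with matching induced metric and mean curvature produces a Lipschitz manifold $\tilde M$ in $\mathcal{PM}$, to which the Shi-Tam/Miao positive mass theorem applies, so by definition $\mathrm{m}_{\mathrm{B}}(\Sigma)\le \mathrm{m}_{\mathrm{ADM}}(\tilde M)=\mathrm{m}_{\mathrm{ADM}}(M)$, completing the inequality.

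For the rigidity statement, if $m_{a\mathbb{S}}(\Sigma)=0$ then the defining integrand must vanish at $t=0$, forcing $R\equiv\bar R$ on $(\Sigma,g|_\Sigma)$; combined with $\Area(\Sigma)=4\pi$ and Gauss-Bonnet, this gives constant Gauss curvature $1$, so $(\Sigma,g|_\Sigma)$ is isometric to $(\mathbb{S}^2,g_{\mathbb{S}^2})$. The principal obstacle will be the Geroch-style calculation in the third step: the foliation is not intrinsic to the ambient geometry but is driven by an extrinsic flow, so the evolving induced metric, lapse, and leaf mean curvature all interact, and the real challenge is to repackage $\frac{d}{dt}\mathrm{m}_{\mathrm{H}}(\Sigma_t)$ as a purely intrinsic quantity on $\Sigma$ depending only on $g|_\Sigma$, which is precisely what makes the resulting $m_{a\mathbb{S}}(\Sigma)$ independent of $H$ and of the bulk geometry.
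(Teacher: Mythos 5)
Your overall architecture matches the paper's: take the extension $M_{\bar R}\in\Ham^0(\Sigma,g_1,H)$ of \cite{Lin13} with prescribed scalar curvature $\bar R=0$, check it is an admissible competitor for the Bartnik infimum (the paper does this via the tangency principle, ruling out closed minimal surfaces), bound its ADM mass by $\mathrm{m}_{\mathrm{H}}(\Sigma)+m_{a\mathbb{S}}(\Sigma)$, and conclude from the definition of $\mathrm{m}_{\mathrm{B}}$. However, the step you yourself flag as the ``principal obstacle'' is a genuine gap, and the paper closes it by a different mechanism. Differentiating $\mathrm{m}_{\mathrm{H}}(\Sigma_t)$ along the foliation gives
\[
\frac{d}{dt}\mathrm{m}_{\mathrm{H}}(\Sigma_t)=\frac{1}{8\pi}\oint_{\Sigma_t}\frac{|\nabla u|^2}{u^2}+\frac{t^2}{2}|M|^2u^{-2}+\frac{t^2}{2}\bar R\,d\sigma\ \ge\ 0,
\]
which is precisely the Geroch-type monotonicity you invoke; but this bounds $\mathrm{m}_{\mathrm{ADM}}=\lim_t\mathrm{m}_{\mathrm{H}}(\Sigma_t)$ from \emph{below} by $\mathrm{m}_{\mathrm{H}}(\Sigma)$, not from above. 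To extract an upper bound you would need to control $\oint|\nabla u|^2/u^2$ and $|M|^2u^{-2}$ from above by quantities intrinsic to $g_1$, and no such bound is available: the lapse $u$ solves a parabolic equation coupled to the flow. The paper instead applies the parabolic maximum principle to the evolution equation for $u^{-2}$ (Lemma 10 of \cite{Lin13}), obtaining the pointwise comparison
\[
u^{-2}(t)\ \ge\ \frac1t\int_1^t\Bigl(K-\tfrac{\tau^2}{2}\bar R\Bigr)_{*}E(\tau,t)\,d\tau+\frac{H^2}{4t},
\]
which, inserted into $\mathrm{m}_{\mathrm{H}}(\Sigma_t)=\frac{1}{4\pi}\oint_{\Sigma_t}\frac t2(1-u^{-2})\,d\sigma$, yields the upper bound for every $t$ and hence in the limit. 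This Gronwall-type structure is also what dictates the exponential weight $E(\tau,t)$ in the actual definition of the asphericity mass; the integrand depends on the endpoint $t$, so it does not arise as the integral of a fixed density, contrary to your hope that the excess terms ``organize themselves.''

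A second, related problem is that your candidate definition of $m_{a\mathbb{S}}$ (an $L^2$-energy of $R-\bar R$ or of the traceless Hessian of the conformal potential) is not the quantity in the theorem. The paper defines
\[
m_{a\mathbb{S}}(\Sigma,t)=\frac12\int_1^t 1-K_*(\tau)E(\tau,t)\,d\tau,\qquad E(\tau,t)=\exp\Bigl(-\int_\tau^t\tfrac{s|M|^{*2}(s)}{2}\,ds\Bigr),
\]
built from the infimum of the Gauss curvature and the sup-norm of Hamilton's tensor $M$ along the modified Ricci flow. Consequently your rigidity argument proves a statement about a different quantity. With the correct definition, $m_{a\mathbb{S}}(\Sigma)=0$ together with the monotonicity of $m_{a\mathbb{S}}(\Sigma,t)$ forces $1-K_*(\tau)E(\tau,t)=0$ for all $\tau\le t$, hence $K_*\equiv1$, and then Gauss--Bonnet with area $4\pi$ forces $K\equiv1$, so $(\Sigma,g_1)$ is the round sphere; your Gauss--Bonnet instinct is right, but it must be run from the actual integrand. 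Finally, note the paper does not simply set $\bar R=0$ at the outset: it proves the estimate with an extra term $e(M_{\bar R},g_{\bar R})$ for general $\bar R\ge0$ and then shows this term vanishes iff $\bar R\equiv0$; that is only a packaging difference from your plan.
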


Before we state the definition of the new asphericity mass
and our other theorems, we review Hamilton's modified 
Ricci flow.   Recall that for $(\Sigma, g_1)$ of dimension two,
Hamilton \cite{Ham88} defined the modified Ricci flow 
$(\Sigma_t,g_t)$ satisfying
\be \label{eq:RF}
     \left\{
             \begin{array}{cc}
                 \frac{\partial}{\partial t}g_{ij}=\left(r-2K\right)g_{ij}+2D_{i}D_{j}f=2M_{ij}\\
                     g\left(1,\cdot\right)=g_1\left(\cdot\right),
             \end{array}
     \right.
\ee
 where $K=K_t(x)$ is the Gauss curvature of $g_t$ at $x\in \Sigma_t$, and 
 \be \label{eq:r}
r=r_t=\frac{1}{Area(\Sigma_t)} \int_{\Sigma_t} 2K_t(x) d\mu=2
\ee
is the
mean scalar curvature, and $f=f(t,x)$ is the Ricci potential satisfying the
equation 
\be \label{eq:f}
\Delta f=2K-r
\ee
with mean value zero.
Thus the 2-tensor
\be\label{eq:M}
M_{ij}=(1-K)g_{ij} + D_iD_jf
\ee
is the trace-free part of $\mathrm{Hess}\left( f\right)$. 

Building upon this work of Hamilton,
Chow proved in \cite{Cho91}
that when $\Sigma$ is diffeomorphic to
a two dimensional sphere, then the modified Ricci flow 
exists for all time and $(\Sigma_t, g_t)$ converges to a standard sphere exponentially fast.   In fact $M$ converges to $0$ exponentially fast.

\begin{defn}\label{defn-aS}
The {\bf asphericity mass} of a surface $\Sigma$ of area $4\pi$ and diffeomorphic to a sphere is defined by
\be
m_{a\mathbb{S}}(\Sigma)= \lim_{t\to \infty} m_{a\mathbb{S}}(\Sigma,t),
\ee
where
\be
m_{a\mathbb{S}}(\Sigma,t)
=\frac{1}{2}  \int_{1}^{t}1-K_*(\tau) E(\tau,t) {\color{blue} + \frac {\tau |M|^{*2}}{2} E(\tau,t) }\,d\tau,
\ee
where
\be\label{E-tau}
E(\tau,t)=exp\left({-\int_{\tau}^{t}\frac{s\left|M\right|^{*2}(s)}{2}ds}\right) .
\ee

Here we have the infimum of the Gauss curvature
\be
K_{*}(\tau)=\inf\{K_\tau(x): \, x\in \Sigma_\tau\}
\ee
and the supremum of the norm of the $M$ tensor
\be\label{eqn-M}
\left|M\right|^{*2}(s)=\sup\{|M_s(x)|^2: \, x\in \Sigma_s\}
\ee
which depend on $g_t$ and $f(t,x)$ of Hamilton's modified Ricci flow.    Observe that this mass depends only upon the intrinsic metric on $\Sigma$ and not on the mean curvature.
\end{defn}

In Section~\ref{sect-aS} we explore this new notion.
In Lemma~\ref{aSt-inc} we prove that $m_{a\mathbb{S}}(\Sigma,t)$ is nonnegative and increasing in $t$.   In Lemma~\ref{aS-finite}, we prove that the asphericity mass is finite and the limit exists.   
In Proposition~\ref{aS=0} we
 show $m_{a\mathbb{S}}(\Sigma)=0$ if and only if $(\Sigma,g_1)$ is isometric to a standard sphere
$(\mathbb{S}^2, g_{\mathbb{S}^2})$.   

In Section~\ref{sect-Ham} we explore the class of 
asymptotically flat three
dimensional Riemannian manifolds foliated
by Hamilton's modified Ricci flow, denoted $\Ham$.
These manifolds are later used as the
admissible extensions needed to estimate the
Bartnik mass and prove Theorem~\ref{thm-m-B}.
This
class includes the class of asymptotically flat rotationally
symmetric manifolds with nonnegative scalar curvature, $\RotSym$ [Proposition~\ref{prop-rot-sym}].  It
also includes admissible extensions of any $\Sigma$
diffeomorphic to a two sphere with a positive Gauss curvature
and arbitrary positive mean curvature
that have prescribed $0$ scalar curvature [Lemma~\ref{positive-0}]. In addition the class includes admissible extensions
for $\Sigma$ with prescribed scalar curvature, $\bar{R}$, not 
equivalent to 0 as long as $\bar{R}$ satisfies the conditions below
(or the hypothesis of Lemma~\ref{K-R}).   

\begin{defn}\label{defn-Ham}
The class of asymptotically flat three
dimensional Riemannian manifolds foliated
by Hamilton's modified Ricci flow, denoted $M\in \Ham$,
are manifolds $M_{\bar{R}}$ diffeomorphic to 
 $[1,\infty)\times\Sigma$ with metric
 \be
g_{\bar{R}}= u^2 dt^2 + t^2 g
\ee
where $g=g_t$ is defined using the modified Ricci flow
and where $u:[1,\infty)\times\Sigma\to (0,\infty)$
depends uniquely upon $(\Sigma, g_1, H, \bar{R})$.
Here $g_1$ is the metric on $\Sigma$
and  $H:\Sigma \to \mathbb (0,\infty)$ is the mean curvature
of $\Sigma=t^{-1}(1)$:
\be
u(1,x)= 2/H_x 
\ee
and 
$
\bar{R} \in C^\alpha( [1,\infty)\times\Sigma)
$
is a prescribed scalar curvature function
which is asymptotically flat in the sense that
\be \label{R-decays}
\int_1^{\infty} |\bar{R}|^*t^2  dt<\infty \qquad \textrm{ and }
\qquad
\| \bar{R} t^2 \|_{C_{0,\alpha}[t,4t]}\leq \frac{C}{t}  
\ee 
and which has bounded "scalar energy"
with respect to the Ricci flow:
\be\label{R-bound}
C_0(\bar{R})=\sup_{1\leq t<\infty}\left\{ \,\int_{1}^{t}\,\,\left(
\frac{\tau^{2}}{2}\bar{R}-K\right)^{*}\,\,\,\exp\left(\int_{1}^{\tau}\frac{s{|M|^{*}}^{2}}{2}ds \right) \,\,d\tau\,\right\} < H^2/4.
\ee
\textcolor{blue}{Note that $C_0\ge 0$ as seen by taking $t=1$ in the supremum.}
For fixed $(\Sigma, g_1, H)$ that encloses a compact region with nonnegative scalar curvature and positive mean curvature, we denote
\be \label{1.22}
\Ham(\Sigma, g_1,H)=\{M_{\bar{R}}: \,\, \bar{R} \textrm{ satisfies (\ref{R-decays}) and (\ref{R-bound})}\}.
\ee   
and
\be \label{1.23}
\Ham^0(\Sigma, g_1,H)=\{M_{\bar{R}}: \,\, \bar{R}\ge 0 \textrm{ satisfies (\ref{R-decays}) and (\ref{R-bound})}\}.
\ee   
and
\be \label{1.24}
\Ham^0=\bigcup \Ham^0(\Sigma, g_1,H)
\ee
where the union is take over all $(\Sigma, g_1, H)$.
\end{defn}
In Proposition \ref{prop:adm ext}, we prove that $\Ham^0 \subset \mathcal{PM}$.

In \cite{Lin13}, the first author proved that for any
$(\Sigma, g_1, H)$ , $H>0$, and any prescribed $\bar{R}$
satisfying (\ref{R-decays}) and (\ref{R-bound}), one has a unique $M_{\bar{R}} $.
 Thus, for $(\Sigma, g_1, H)$ which is the boundary of a closed 3-dimensional region with nonnegative scalar curvature and positive mean curvature,
\be
\mathrm{m}_{\mathrm{B}}(\Sigma) \le \inf \{\mathrm{m}_{\mathrm{ADM}}(M): \,\, M\in \Ham^0(\Sigma, g_1,H)\}.
\ee

In  Section~\ref{sect-M-R} we prove the following theorem:

\begin{thm}\label{thm-M-R}
If  $M\in \Ham^0$ with
$\Sigma$ a surface of constant positive mean curvature \textcolor{blue}{ satisfying (\ref{R-bound})
and (\ref{1.24}),}
and area $4\pi$ then
\be\label{eqn-M-R}
\mathrm{m}_{\mathrm{ADM}}(M_{\bar{R}}) \leq m_{a\mathbb{S}}(\Sigma)+ \mathrm{m}_{\mathrm{H}}(\Sigma)
+ 
e(M_{\bar{R}},g_{\bar{R}})
\ee
where the additional term
\be\label{eqn-e}
e(M_{\bar{R}},g_{\bar{R}}) 
= \lim_{t\to\infty} e_t(M_{\bar{R}}, g_{\bar{R}})
\ee
where
\be\label{e-t}
e_t(M_{\bar{R}}, g_{\bar{R}})
=
\frac{1}{2}\int_{1}^{t}\frac{ \tau^{2}}{2}\bar{R}^{*}(\tau) E(\tau,t) d\tau
\textrm{ with } E(\tau,t) \textrm{ defined as in (\ref{E-tau}).}
\ee
Here $\left|M\right|^{*2}(s)$, defined as in (\ref{eqn-M}),
depends only on the metric $g_1$ 
and
\be
\bar{R}^{*}(\tau)=\sup\{\bar{R}(\tau, x) : x\in \Sigma\}
\ee
depends only on the prescribed scalar curvature $\bar{R}$,
so that $e(M_{\bar{R}},g_{\bar{R}})$ depends only on $g_1$
and $\bar{R}$. 
\end{thm}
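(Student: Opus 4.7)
The plan is to express $\mathrm{m}_{\mathrm{ADM}}(M_{\bar R})$ as the $t\to\infty$ limit of the Hawking masses of the leaves $\Sigma_t=\{t\}\times\Sigma$ and estimate its growth by a Gronwall argument driven by the scalar-curvature equation that $u$ satisfies in the construction of $\Ham$ from \cite{Lin13}. Since the tensor $M_{ij}$ is trace-free, the area form $d\mu_{g_t}$ is preserved under Hamilton's modified Ricci flow, so each leaf carries the induced metric $t^{2}g_t$ of area $4\pi t^{2}$ with outward unit normal $u^{-1}\partial_t$ and mean curvature $H_t=2/(ut)$. The Hawking mass therefore takes the explicit form
\begin{equation*}
\mathrm{m}_{\mathrm{H}}(\Sigma_t)=\frac{t}{2}\left(1-\frac{1}{4\pi}\int_{\Sigma}\frac{d\mu_{g_1}}{u^{2}}\right),
\end{equation*}
and the CMC hypothesis at $t=1$ gives $\mathrm{m}_{\mathrm{H}}(\Sigma_1)=\tfrac{1}{2}(1-H^{2}/4)=\mathrm{m}_{\mathrm{H}}(\Sigma)$. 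The equality $\lim_{t\to\infty}\mathrm{m}_{\mathrm{H}}(\Sigma_t)=\mathrm{m}_{\mathrm{ADM}}(M_{\bar R})$ should follow from the exponential convergence of $g_t$ to the round sphere proved in \cite{Cho91} combined with the asymptotic decay (\ref{R-decays}), which together force the leaves to become asymptotically round coordinate spheres.

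Next I would differentiate the Hawking formula in $t$ and substitute the pointwise evolution of $1/u^{2}$, obtained by expanding the scalar-curvature identity for the warped metric $u^{2}dt^{2}+t^{2}g_t$, using $\partial_tg_t=2M_t$ and $\Delta f=2(K-1)$, and integrating the resulting spatial-divergence terms away over $\Sigma$. The target pointwise-to-integrated bound is
\begin{equation*}
\frac{d}{dt}\mathrm{m}_{\mathrm{H}}(\Sigma_t)+\frac{t\,|M|^{*2}(t)}{2}\,\mathrm{m}_{\mathrm{H}}(\Sigma_t) \;\leq\; \tfrac{1}{2}\bigl(1-K_{*}(t)\bigr)+\tfrac{t^{2}}{4}\bar R^{*}(t)+\tfrac{t\,|M|^{*2}(t)}{2}\,\mathrm{m}_{\mathrm{H}}(\Sigma).
\end{equation*}
The coefficient $t|M|^{*2}/2$ on the left is exactly what generates the weight $E(\tau,t)$ after Gronwall, and the $(1-K_*)$ and $\bar R^{*}$ on the right arise by passing to sup-norms, which is precisely where the asphericity of $\Sigma$ enters.

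With this differential inequality in hand and matching initial value $\mathrm{m}_{\mathrm{H}}(\Sigma_1)=\mathrm{m}_{\mathrm{H}}(\Sigma)$, multiplying by the integrating factor $\exp(\int_1^t s\,|M|^{*2}/2\,ds)$ and integrating from $1$ to $t$ yields
\begin{equation*}
\mathrm{m}_{\mathrm{H}}(\Sigma_t)\;\leq\;\mathrm{m}_{\mathrm{H}}(\Sigma)+\tfrac{1}{2}\!\int_1^t(1-K_*(\tau))E(\tau,t)\,d\tau+\tfrac{1}{2}\!\int_1^t\tfrac{\tau^{2}}{2}\bar R^{*}(\tau)E(\tau,t)\,d\tau,
\end{equation*}
in which the two integrals are exactly $m_{a\mathbb{S}}(\Sigma,t)$ and $e_t(M_{\bar R},g_{\bar R})$. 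Sending $t\to\infty$, invoking Lemma~\ref{aS-finite} to control the first limit and using (\ref{R-decays})--(\ref{R-bound}) to control the second, gives (\ref{eqn-M-R}). I expect the main obstacle to be the pointwise scalar-curvature identity itself: in the non-rotationally-symmetric setting the Laplacian of $u$ on $\Sigma$ and the coupling to $M$ must be split into a spatial divergence (killed by integration over $\Sigma$) and a pointwise-signed remainder absorbable into $K_*$, $\bar R^{*}$, and a multiple of $\mathrm{m}_{\mathrm{H}}(\Sigma_t)$ with the sharp constants that make the Gronwall step reproduce exactly the factor $\tfrac{1}{2}$ in the definitions of $m_{a\mathbb{S}}$ and $e_t$.
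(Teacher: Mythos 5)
Your overall architecture---realizing $\mathrm{m}_{\mathrm{ADM}}(M_{\bar R})$ as $\lim_{t\to\infty}\mathrm{m}_{\mathrm{H}}(\Sigma_t)$ and running a Gronwall argument whose integrating factor produces the weight $E(\tau,t)$---is in the right family, but the central step, your proposed differential inequality for $\mathrm{m}_{\mathrm{H}}(\Sigma_t)$, does not hold up. From (\ref{eq:RFu}) one computes (this is exactly (\ref{mH inc}) in Section~\ref{section-mH})
\be
\frac{d}{dt}\mathrm{m}_{\mathrm{H}}(\Sigma_t)=\frac{1}{8\pi}\oint_{\Sigma_t}\frac{|\nabla u|^{2}}{u^{2}}+\frac{t^{2}}{2}|M|^{2}u^{-2}+\frac{t^{2}}{2}\bar{R}\,d\sigma .
\ee
Integration over $\Sigma$ does not kill the Laplacian term $\tfrac{1}{2}u^{-1}\Delta u$; it converts it into the \emph{nonnegative} quantity $\tfrac{1}{2}|\nabla u|^{2}/u^{2}$, which works against you when you want an upper bound on $\frac{d}{dt}\mathrm{m}_{\mathrm{H}}(\Sigma_t)$ and is not controlled by $K_*$, $\bar R^{*}$, $|M|^{*2}$ and $\mathrm{m}_{\mathrm{H}}(\Sigma_t)$. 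The bookkeeping also fails to close: estimating $\frac{1}{8\pi}\oint\frac{t^{2}}{2}|M|^{2}u^{-2}d\sigma\leq\frac{t^{2}|M|^{*2}}{4}\cdot\frac{1}{4\pi}\oint u^{-2}d\sigma=\frac{t^{2}|M|^{*2}}{4}-\frac{t|M|^{*2}}{2}\mathrm{m}_{\mathrm{H}}(\Sigma_t)$ does produce your damping term on the left, but leaves the source $\frac{t^{2}|M|^{*2}}{4}$ on the right rather than $\frac{t|M|^{*2}}{2}\mathrm{m}_{\mathrm{H}}(\Sigma)$; after Gronwall this source integrates (by parts, using $\partial_\tau E(\tau,t)=\frac{\tau|M|^{*2}(\tau)}{2}E(\tau,t)$) to $\frac{1}{2}\int_1^t(1-E(\tau,t))\,d\tau+\frac{1}{2}\bigl(1-E(1,t)\bigr)$, which overshoots the corresponding piece of $m_{a\mathbb{S}}(\Sigma,t)$. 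Note also that $m_{a\mathbb{S}}(\Sigma,t)=\frac{1}{2}\int_1^t\bigl(1-K_*(\tau)E(\tau,t)\bigr)d\tau$, not $\frac{1}{2}\int_1^t\bigl(1-K_*(\tau)\bigr)E(\tau,t)\,d\tau$, so the output of your Gronwall step would not be the stated quantity even if the inequality held.

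The paper sidesteps all of this by working pointwise rather than with the integrated Hawking mass: the parabolic maximum principle is applied to the evolution equation of $u^{-2}$ (Lemma 10 of \cite{Lin13}), where at the spatial minimum the Laplacian has a favorable sign and the gradient term vanishes, yielding the pointwise lower bound
\be
u^{-2}(t)\geq\frac{1}{t}\int_{1}^{t}\Bigl(K-\frac{\tau^{2}}{2}\bar{R}\Bigr)_{*}E(\tau,t)\,d\tau+\frac{1}{t}u^{-2}(1)
\ee
with the weight $E(\tau,t)$ already built in. One then bounds $1-u^{-2}$ pointwise from above by $\frac{1}{t}\int_1^t\bigl(1-K_*E\bigr)d\tau+\frac{1}{t}\int_1^t\frac{\tau^2}{2}\bar R^{*}E\,d\tau+\frac{1}{t}\bigl(1-\frac{H^2}{4}\bigr)$, integrates over $\Sigma_t$ once at the very end, and lets $t\to\infty$. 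To repair your argument you would either need a mechanism that absorbs $\oint|\nabla u|^{2}/u^{2}$ (none is apparent), or you should replace the integrated Gronwall step by this pointwise maximum-principle bound.
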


Before proving this theorem,
we first prove that $e_t(M_{\bar{R}}, g_{\bar{R}})$ is nonnegative and increasing in $t$ [Lemma~\ref{et-inc}] and
 the limit in (\ref{eqn-e})
exists and is finite [Lemma~\ref{e-finite}].  

In Section~\ref{sect-0}, we apply this theorem
to prove our main theorem, Theorem~\ref{thm-m-B}.   To do so we prove
$e(M_{\bar{R}}, g_{\bar{R}})=0$ if and only if we prescribe zero 
scalar curvature, $\bar{R}=0$ [Proposition~\ref{e=0}].   
Combining this proposition
with Theorem~\ref{thm-M-R} then implies Theorem~\ref{thm-m-B}.   
\vspace{.2cm}

In Section~\ref{section-mH} we consider rigidity and monotonicity of the Hawking mass of level sets $t=r$ for $M \in \Ham(\Sigma, g_1,H)$.   In \cite{Lin13} the first author proved the Hawking mass is
monotone under the following hypothesis.   Here we combine the
first author's monotonicity result with an analysis of the rigid case:

\begin{thm}\label{thm-min}
Let $(\Sigma, g_1)$ be a surface diffeomorphic to a sphere with 
positive mean curvature (not necessarily constant) and let
$M_{\bar{R}}\subset \Ham^0$, \textcolor{blue}{ satisfying (\ref{R-bound})
and (\ref{1.24}),}
be its admissible
extension
with prescribed scalar curvature $\bar{R}\ge 0$ then
we have monotonicity as in \cite{Lin13}:
\be
\mathrm{m}_{\mathrm{H}}(\Sigma_r) \textrm{ is nondecreasing where }
\Sigma_r= t^{-1}(r).
\ee
Furthermore, if
\be
\mathrm{m}_{\mathrm{ADM}}(M_{\bar{R}})=\mathrm{m}_{\mathrm{H}}(\Sigma)
\ee
then $\bar{R}=0$ everywhere and
$\Sigma$ is isometric to standard sphere, $(\mathbb{S}^2, g_{\mathbb{S}^2})$ and $M_{\bar{R}}$ is rotationally symmetric.
If $\mathrm{m}_{\mathrm{H}}(\Sigma)=0$ then $M_{\bar{R}}$ is isometric to a rotationally symmetric region in Euclidean space.   If $\mathrm{m}_{\mathrm{H}}(\Sigma)=m>0$ then
 $M_{\bar{R}}$ is isometric to a rotationally symmetric region in Schwarzschild
 space of mass $m$.
\end{thm}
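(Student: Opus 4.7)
The monotonicity $\mathrm{m}_{\mathrm{H}}(\Sigma_r)\nearrow$ is quoted directly from \cite{Lin13}, so the work lies in the rigidity half. Since $\mathrm{m}_{\mathrm{ADM}}(M_{\bar{R}}) = \lim_{r\to\infty}\mathrm{m}_{\mathrm{H}}(\Sigma_r)$ and this limit coincides with $\mathrm{m}_{\mathrm{H}}(\Sigma)=\mathrm{m}_{\mathrm{H}}(\Sigma_1)$ by hypothesis, the monotone map $r\mapsto \mathrm{m}_{\mathrm{H}}(\Sigma_r)$ must be constant on $[1,\infty)$. All three rigidity conclusions then arise by tracking the equality case in the derivative formula that underlies the monotonicity.

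The first step is to extract from \cite{Lin13} the formula for $\tfrac{d}{dr}\mathrm{m}_{\mathrm{H}}(\Sigma_r)$ and rewrite it as a sum of pointwise nonnegative integrands of Geroch type. The formula should contain separately a term proportional to $\bar{R}\,u$, a term proportional to $|M|^2$, and a term controlling the oscillation of $H$ around its mean on each leaf (the analogue of Geroch's $|\nabla H|^2/H^2$ term). Setting the derivative to zero on every $r\ge 1$ forces each of these nonnegative integrands to vanish pointwise on every slice, giving $\bar{R}\equiv 0$ on all of $M_{\bar{R}}$, $M\equiv 0$ on every leaf, and $H$ constant on each leaf.

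The second step upgrades $M\equiv 0$ to the claim that $(\Sigma,g_1)$ is the standard round sphere. By definition $M_{ij}=(1-K)g_{ij}+D_iD_jf$, so $M\equiv 0$ means the trace-free Hessian of the Ricci potential vanishes, $D_iD_jf=(K-1)g_{ij}$. On a topological sphere with $\int f\,d\sigma=0$, this is an Obata-type rigidity: taking divergence and applying the 2D Bochner formula together with $\Delta f=2K-2$ yields $\nabla K=-K\nabla f$, from which one deduces $f\equiv 0$ and $K\equiv 1$. Hence $(\Sigma,g_1)\cong(\mathbb{S}^2,g_{\mathbb{S}^2})$, and the modified Ricci flow is stationary, so $(\Sigma_r, r^2 g_1)$ is a round sphere of radius $r$ for every $r\ge 1$.

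Finally, knowing each leaf is round, that $H$ is constant on each leaf, and that $\bar{R}\equiv 0$, the defining equation for $u$ in $g_{\bar{R}}=u^2\,dt^2+t^2 g_{\mathbb{S}^2}$ reduces to an ODE in $t$ alone. Its general solution is the Schwarzschild profile $u^{-2}(t)=1-2m/t$; rotational symmetry of $M_{\bar{R}}$ then follows immediately from the ansatz. Matching $\mathrm{m}_{\mathrm{H}}$ at $t=1$ pins the parameter as $m=\mathrm{m}_{\mathrm{H}}(\Sigma)$, yielding Schwarzschild of mass $m$ when $m>0$ and Euclidean space when $m=0$. The main obstacle I expect is organizing Lin's derivative formula carefully enough to isolate a coercive term in $H-\bar H$, and executing the Obata-style argument on the Ricci potential equation $D_iD_jf=(K-1)g_{ij}$; once both are in place, the ODE reduction and identification with Schwarzschild is routine.
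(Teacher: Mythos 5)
Your proposal is correct and follows essentially the same route as the paper: constancy of $\mathrm{m}_{\mathrm{H}}(\Sigma_r)$ forces each nonnegative term in the derivative formula (which in the paper reads $\frac{1}{8\pi}\oint |\nabla u|^2/u^2 + \frac{t^2}{2}|M|^2u^{-2}+\frac{t^2}{2}\bar{R}$, with $|\nabla u|^2/u^2$ being exactly your $H$-oscillation term since $H=2/u$) to vanish, giving $\bar{R}=0$, $M\equiv 0$, $u=u(t)$, and then the ODE reduction to the Schwarzschild/Euclidean profile via the appendix lemma. The only divergence is that where you propose an Obata-style argument to get roundness from $M\equiv 0$ (whose final step, passing from $\nabla K=-K\nabla f$ to $f\equiv 0$, is left unjustified and would need, e.g., the Kazdan--Warner identity), the paper simply cites Hamilton's theorem that the only gradient solitons on a compact surface have constant curvature.
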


Note that we do not assume that $\Sigma$ is a constant mean curvature surface in the hypothesis of this theorem.   This is only
a conclusion in the rigid case.   This theorem was already known
in the rotationally symmetric case to Bartnik \cite[Section 5 ]{Bar93}. 

\vspace{.2cm}
\noindent
{\bf{Acknowledgements:}} The authors would like to thank the Mathematical Sciences Research Institute, and particularly the Women's Program in 
General Relativity in August 2013, for bringing them together in a way which directly lead to this collaboration.  The first author was a speaker in this program funded by MSRI. 
The second author was a visiting research professor at MSRI
during Fall 2013 supported by the NSF under Grant No. 0932078 000.  Additional funding allowing the authors to meet again in January 2014 was provided by the the NSC grant of the first author, NSC-103-2115-M-002-001-MY2,
and the NSF grant of the second author, NSF-DMS DMS-1309360. { \color{blue}{We are very grateful to Hyun-Chul Jang and Pengzi Miao for their careful reading of this paper, for discovering the errors in the publication, and for closely reading our corrections.} }


\section{Hamilton's Ricci Flow and Prescribed Scalar Curvature}\label{sect-review}

In this section, we review the first author's construction of
asymptotically flat manifolds foliated by Hamilton's modified Ricci flow \cite{Lin13}.   Recall that this flow has been defined in
the introduction.   We first recall Hamilton and Chow's theorems
concerning modified Ricci flow first proven in \cite{Ham88}
and \cite{Cho91}.   See also  
Theorems 5.64 and 5.77 from
textbook of Chow and Knopf \cite{CK04}.

\begin{thm} [Hamilton] \cite{Ham88}
Given a surface, $\Sigma$ diffeomorphic to a sphere, with positive Gauss curvature 
there exists a unique solution $g(t)$ to Hamilton's modified Ricci 
flow with $g(1)=g_1$  as defined in (\ref{eq:RF})-(\ref{eq:M}).  The solution $g(t)$ converges exponentially in any $C^k$-norm to a smooth constant-curvature metric $g_{\infty}$ as $t\rightarrow \infty$. 
\end{thm}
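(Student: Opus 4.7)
The plan is to follow Hamilton's classical three-step strategy from \cite{Ham88}: short-time existence, long-time existence via uniform curvature bounds, and exponential convergence using a monotone entropy. In dimension two the Ricci tensor reduces to $Kg$, the modified flow is essentially conformal, and $r=2$ is topologically fixed on $\Sigma \cong \mathbb{S}^2$ by Gauss--Bonnet, which simplifies the analysis considerably.

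\textbf{Short-time existence and uniqueness.} First I would pull back $g(t)$ by the flow of the time-dependent vector field $-\nabla f$, absorbing the Hessian term $2D_iD_jf$ in (\ref{eq:RF}) into a Lie derivative and reducing the modified flow to the gauge-equivalent pure normalized flow $\partial_t \tilde g = (r - 2\tilde K)\tilde g$. This is a strictly parabolic equation for the conformal factor on a compact surface, so standard parabolic theory (or DeTurck's trick applied to the original system) yields smooth short-time existence and uniqueness in $C^\infty$.

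\textbf{Long-time existence.} The Gauss curvature evolves by $\partial_t K = \Delta K + 2K(K-r) + \langle \nabla f, \nabla K\rangle$, so the parabolic maximum principle applied to $K_{\min}$ immediately shows that the hypothesis $K(1,\cdot) > 0$ is preserved for all time. For the upper bound I would invoke Hamilton's entropy $N(g) = \int_\Sigma K \log K \, d\mu$, which is monotone decreasing along the flow when $K > 0$, together with a differential Harnack inequality; together these bound $K_{\max}$ uniformly. Standard Shi-type derivative estimates then control every $\nabla^j \mathrm{Rm}$, and the flow extends to $[1, \infty)$.

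\textbf{Exponential convergence and main obstacle.} The decisive quantity is $|M|^2$, the squared norm of the trace-free Hessian of the Ricci potential $f$. Once $K$ is trapped close to $1$, a direct computation yields an inequality of the schematic form $\partial_t |M|^2 \leq \Delta |M|^2 - \delta |M|^2$ with $\delta > 0$ inherited from the Lichnerowicz-type spectral gap of the round two-sphere; the maximum principle then gives exponential decay $|M|^2 \leq Ce^{-\delta t}$. Since $M \equiv 0$ forces $f$ to be constant and hence $K \equiv r/2 = 1$, and the normalization of $r$ preserves area, one obtains $C^0$ exponential convergence to the round metric of area $4\pi$; Shi-type bootstrap upgrades this to $C^k$ for each $k$. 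The hardest step is the global upper bound on $K$ together with the extraction of the exponential rate, since the entropy $N$ is monotone but not immediately coercive; Hamilton overcomes this with his Harnack-type estimate and a classification of the candidate soliton limits, after which the remainder of the proof is routine bootstrap regularity.
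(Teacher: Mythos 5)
The paper does not actually prove this statement: it is attributed to Hamilton, with the text after the theorem merely recording the input it will use later, namely the exponential decay $|\nabla^k M|\leq C_k e^{-c_k t}$ from \cite{Ham88} and \cite[Corollary 5.63]{CK04}, from which $C^k$ convergence follows since $\partial_t g = 2M$. Your proposal instead reconstructs Hamilton's original argument, and the outline is essentially the correct one (gauge equivalence to the normalized flow, preservation of $K>0$ by the maximum principle, the entropy $\int K\log K\,d\mu$ and the Harnack estimate to trap $K$ between positive bounds, then exponential decay of $M$ and bootstrap). Two points deserve sharpening. First, the exponential rate for $M$ does not come from a Lichnerowicz-type spectral gap: Hamilton's computation gives $\partial_t|M|^2 = \Delta|M|^2 - 2|\nabla M|^2 - 2R\,|M|^2$ along the modified flow, so the decay constant is exactly twice the uniform positive lower bound on the scalar curvature supplied by the Harnack step. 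Second, the claim that ``$M\equiv 0$ forces $f$ to be constant'' is not immediate: $M=0$ says $D_iD_jf=\tfrac{1}{2}(\Delta f)g_{ij}$, i.e.\ $\nabla f$ is a conformal Killing field, and on $\mathbb{S}^2$ this equation has nontrivial solutions; one needs Hamilton's soliton classification (via the Kazdan--Warner-type identity) to conclude the limit metric is round. You do mention the soliton classification as part of Hamilton's argument, so this is a matter of where the weight of the proof sits rather than a gap. Note also that the theorem as stated in the paper assumes $K>0$ initially; the removal of that hypothesis is Chow's contribution \cite{Cho91}, stated separately, so your restriction to the positive-curvature case is exactly right here.
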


The Theorem follows from the exponential decay of $M$. See also \cite[Corollary 5.63]{CK04}. For $k=0,1,2,\cdots$, there are constants $0<c_k, C_k<\infty$ depending only on $g_1$ such that 
\be
|\nabla^k M|\leq C_ke^{-c_k t} 
\ee
which proves that the solution $g(t)$ converges exponentially fast in all $C^k$ to a metric $g_{\infty}$ such that the tensor $M_{\infty}$ vanishes identically. Therefore, we know that the Gauss curvature has decay rate
\be
|K-1|\leq Ce^{-ct} 
\ee
where $c$ and $C$ are constants depending on $g_1$ only. 
\begin{thm} [Chow] \cite{Cho91}
Given a surface, $\Sigma$, with arbitrary Gauss curvature,
there exists a unique solution to Hamilton's modified Ricci 
flow as defined in (\ref{eq:RF})-(\ref{eq:M}).
Furthermore the flow eventually has positive
Gauss curvature so the Gauss curvature and the tensor $M$
both eventually decay exponentially.
\end{thm}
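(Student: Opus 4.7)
The plan is to deduce Chow's theorem from Hamilton's theorem stated just above by showing that the modified Ricci flow drives the Gauss curvature into the positive regime after a finite time, even when $K_1$ is only an arbitrary smooth function on a topological sphere $\Sigma$. Once $K>0$ everywhere, Hamilton's theorem applied to the metric at that moment delivers both the uniqueness statement and the exponential decay of $K$ and $M$ on $[T,\infty)$, which then extends backward to $[1,\infty)$ by parabolic uniqueness.

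First, I would establish short-time existence and uniqueness for the system (\ref{eq:RF})--(\ref{eq:M}). Relative to the normalized Ricci flow $\partial_{t} g = (r-2K) g$, the extra term $2 D_{i} D_{j} f$ is a symmetric tensor arising from the Hessian of the Ricci potential $f$, which is uniquely determined at each time by the Poisson equation (\ref{eq:f}) with the zero-mean condition. This makes the modified flow differ from the normalized flow by a time-dependent diffeomorphism generated by $-\nabla f$, so the usual DeTurck-type parabolic theory for the normalized flow yields a unique short-time smooth solution for the modified flow as well. For long-time existence, one writes $g(t) = e^{2u(t)} g_{1}$ on the sphere and uses the evolution equation
\[
\partial_{t} K = \Delta K + 2K(K-r) + \langle \nabla f, \nabla K \rangle
\]
(up to diffeomorphism terms), together with the parabolic maximum principle and bootstrap estimates for $f$ through (\ref{eq:f}), to bound the Gauss curvature on any finite interval and rule out finite-time singularities.

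Next, and this is the main content of \cite{Cho91}, I would show that for some $T=T(g_{1}) < \infty$ one has $K(t,x) > 0$ for every $x \in \Sigma$ and $t \geq T$. The key ingredients are Chow's entropy functional, which is monotone along the flow once $K>0$, and a differential Harnack inequality valid on the sphere. The strategy is to control $K_{*}(t) = \inf_{x} K(t,x)$: since by Gauss--Bonnet on a sphere of area $4\pi$ the average of $K$ equals $1$, and the nonlinearity $2K(K-r)$ penalizes $K$ from staying nonpositive against a positive average, the maximum principle combined with a Harnack-type estimate prevents $K_{*}$ from remaining bounded away from positive values, and the open set $\{K<0\}$ shrinks and vanishes in finite time. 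This is the step that accommodates the relaxation from Hamilton's positive-curvature hypothesis to Chow's arbitrary-curvature hypothesis.

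Finally, once $K(T,\cdot) > 0$ everywhere, Hamilton's theorem applied with initial datum $g(T)$ gives a unique solution on $[T,\infty)$ converging exponentially in every $C^{k}$-norm to a constant-curvature metric with $M \equiv 0$; by parabolic uniqueness this coincides with $g(t)$ for $t \geq T$, and the stated decay of $|K-1|$ and $|M|$ follows. The main obstacle is the positivity step: the entropy and Harnack machinery for surface Ricci flow is delicate and is precisely what distinguishes Chow's result from Hamilton's, while the short-time and long-time existence steps are quasi-standard parabolic theory applied to a system that reduces, modulo diffeomorphism, to the normalized Ricci flow on $\mathbb{S}^{2}$.
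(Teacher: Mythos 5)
First, a point of comparison: the paper does not prove this statement at all --- it is quoted as Chow's theorem, with \cite{Cho91} and the Chow--Knopf textbook (Theorems 5.64 and 5.77 of \cite{CK04}) as the references --- so your proposal must be judged against Chow's published argument rather than against anything in the paper. At the level of architecture your sketch does match that argument: short-time existence and uniqueness by relating the modified flow to the normalized flow via the diffeomorphisms generated by $\nabla f$, long-time existence on the sphere by maximum-principle and bootstrap estimates, a finite-time positivity step, and then Hamilton's theorem (together with forward-in-time uniqueness iterated from $t=1$; no ``backward'' uniqueness is needed or available) to get the exponential decay of $|K-1|$ and $|M|$.

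However, there is a genuine gap in exactly the step you identify as the main content, and as written your mechanism for it is circular. You invoke ``Chow's entropy functional, which is monotone along the flow once $K>0$'' --- but an entropy that requires $K>0$ cannot be an ingredient in the proof that $K$ becomes positive; Hamilton's entropy $\int R\log R\,d\mu$ is not even defined while the curvature changes sign. Chow's actual innovation is a \emph{shifted} entropy $\int (R-s)\log(R-s)\,d\mu$ for the scalar curvature $R=2K$, where $s(t)<0$ solves the ODE $ds/dt=s^{2}-rs$ with $s(1)<\min_{\Sigma}R(\cdot,1)$, so that $R-s>0$ for all time by the maximum principle; the Harnack inequality is likewise proved for $\log(R-s)$, not $\log R$. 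Second, your fallback claim that ``the maximum principle \dots prevents $K_{*}$ from remaining bounded away from positive values'' is not enough: comparing $K_{*}$ with the ODE $y'=2y(y-1)$ (the reaction term of the curvature evolution in this normalization) shows only that $K_{*}$ increases monotonically toward $0$ \emph{asymptotically}; it never crosses zero in finite time on the strength of pointwise comparison alone, and nothing forces the set $\{K<0\}$ to ``shrink and vanish'' monotonically. What actually closes the argument in \cite{Cho91} is the combination of the shifted entropy (which is controlled for sign-changing curvature and yields uniform geometric bounds), the shifted Harnack estimate (which propagates, in bounded time, the positivity that Gauss--Bonnet guarantees at the maximum point, since the average of $K$ is $1$), and the exponential decay $s(t)\to 0$; together these force $K>0$ everywhere at some finite time $T$, after which positivity persists and Hamilton's theorem takes over. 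So your proposal names the right tools but assembles them in a way that cannot produce the conclusion; repairing it amounts to reproducing Chow's shifted-entropy and shifted-Harnack estimates, which is precisely the content of the cited work.
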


In \cite{Lin13} the first author constructs aymptotically flat 3-metrics of prescribed scalar curvature using parabolic methods.     Given $\left(\Sigma,g_{1}\right)$ a surface of area $4\pi$ which is diffeomorphic to a sphere, an admissible extension is created by taking $M_{\bar{R}}=[1,\infty)\times\Sigma$
equipped with the metric 
\be
g_{\bar{R}}=u^{2}dt^{2}+t^{2}g,
\ee
where $g = g_t$ is the solution of the modified Ricci flow. 
This metric $g_{\bar{R}}=u^{2}dt^{2}+t^{2}g$
has the scalar curvature $\bar{R}$ if and only if $u$ satisfies the
parabolic equation 
\be\label{eq:RFu}
t\frac{\partial u}{\partial t}=\frac{1}{2}u^{2}\Delta u+\frac{t^{2}}{4}\left|M\right|^{2}u+\frac{1}{2}u-\frac{1}{4}\left(2K-t^{2}\bar{R}\right)u^{3},
\ee
where $\Delta$ is the Laplacian with respect to $g$, $K$ is
the Gauss curvature of $g$, $\bar{R}$ is the scalar curvature of $g_{\bar{R}}$, and 
\be
\left| M  \right|^2 = M_{ij}M_{kl}g^{ik}g^{jl}.
\ee  
When the manifold is asymptotically flat with suitable prescribed $\bar{R}$,
the ADM mass is
\be
\mathrm{m}_{\mathrm{ADM}}(M_{\bar{R}}, g_{\bar{R}})=\lim_{t\rightarrow \infty} \mathrm{m}_{\mathrm{H}}(\Sigma_t) =  \lim_{t\rightarrow\infty}\frac{1}{4\pi}\oint_{\Sigma_{t}}\frac{t}{2}(1-u^{-2})d\sigma \label{eq:ADM}
\ee
as  $\Sigma_t$ are nearly round spheres under the Ricci flow.  The fact that $m_{ADM}(M) = \lim_{t\rightarrow \infty} m_H(\Sigma_t)$ where $\Sigma_t$ are nearly round spheres (not just coordinate spheres) was proven by Yuguang Shi, and Guofang Wang, and Jie Wu in \cite{SWW09}. 

Theorems 11-13 of the first author in \cite{Lin13} are combined in the following theorem which provides for the existence and uniqueness of an admissible extension of $\Sigma$ with prescribed scalar curvature $\bar{R}$:

\begin{thm} \cite{Lin13} \label{thm: existence}
Assume that $\bar{R}\in C^{\alpha}(M_{\bar{R}})$ satisfying the decay conditions
\be \label{barR dec}
\int_{1}^{\infty}|\bar{R}|^{*}t^{2}dt<\infty,\mbox{ and } ||\bar{R}t^{2}||_{\alpha,I_{t}}\leq\frac{C}{t} \mbox{ where }  I_{t}=[t,4t], t\geq 1. 
\ee
Let $C_0$ be the nonnegative constant defined by 
\begin{equation} \label{eq:K-1}
C_0=\sup_{1\leq t<\infty}\left\{ -\int_{1}^{t}\left(K-\frac{\tau^{2}}{2}\bar{R}\right)_{*}\exp(\int_{1}^{\tau}\frac{s{|M|^{*}}^{2}}{2}ds)d\tau\right\} <\infty.
\end{equation}
 Then for any function $\phi  \in C^{2,\alpha}(\Sigma)$ satisfying
\be \textcolor{blue}{
0<\phi < \frac{1}{\sqrt{C_0}} \,\,\textrm{ if }\,\, C_0>0 \qquad \textrm{ or } \qquad 0<\phi \,\,\textrm{ if }\,\, C_0=0}
\label{eq:ic-1}
\ee
there is a unique positive solution $u\in C^{2+\alpha}(M_{\bar{R}})$
of (\ref{eq:RFu}) with the initial condition
\begin{equation}
u(1,\cdot)=\phi(\cdot).
\end{equation}
 Moreover, $g_{\bar{R}}$ satisfies the asymptotically flat condition for $t>t_{0}$,
where $t_{0}$ is  some fixed constant with finite ADM mass 
and 
\begin{equation}
\mathrm{m}_{\mathrm{ADM}}(M_{\bar{R}})=\lim_{t\rightarrow\infty}\frac{1}{4\pi}\oint_{\Sigma_{t}}\frac{t}{2}(1-u^{-2})d\sigma.\label{eq:ADM}
\end{equation}
\end{thm}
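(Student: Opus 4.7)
The plan is to treat equation \eqref{eq:RFu} as a quasilinear parabolic equation on the time-dependent background $(\Sigma, g_t)$ and establish (i) short-time existence, (ii) uniform a priori bounds that prevent the solution from leaving the regime of parabolicity, (iii) long-time existence by continuation, and (iv) asymptotic flatness together with finiteness of the ADM mass. Since the principal part is $\tfrac{1}{2} u^2 \Delta u$, the equation is strictly parabolic so long as $u$ remains bounded away from $0$ and $\infty$; essentially everything hinges on propagating such bounds, and the constant $C_0$ in \eqref{eq:K-1} is designed exactly to make this possible.

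For short-time existence and uniqueness, I would linearize around the initial datum $\phi$ and apply parabolic Schauder theory on $[1,1+\varepsilon]\times\Sigma$: fix a Hölder class in which $u$ lies near $\phi$, solve the associated linear Dirichlet-free problem on the closed manifold $\Sigma$, and use a contraction mapping argument in $C^{2+\alpha}$. The assumptions $\phi \in C^{2,\alpha}(\Sigma)$ and $\bar R \in C^\alpha$ give the Schauder estimates needed for the fixed point, while Chow's theorem guarantees that the background data $g_t$, $K$, and $|M|^2$ are smooth and bounded in $t$.

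The heart of the argument is the a priori upper bound on $u$. Setting $w = u^{-2}$, a direct computation turns \eqref{eq:RFu} into an equation whose zeroth-order behavior is governed by $(tw)_t + \tfrac{t^2}{2}|M|^2 w = \tfrac{1}{2}(2K - t^2\bar R)$ plus a term involving $\Delta u$ that has a favorable sign at a spatial minimum of $w$. Applying the parabolic maximum principle to $w_*(t) = \min_x w(t,x)$ yields, via an integrating factor $\exp\!\left(\int_1^t \tfrac{s|M|^{*2}}{2}\,ds\right)$, the lower bound
\be
w_*(t) \;\geq\; \phi^{-2}\,\exp\!\left(-\int_1^t \tfrac{s|M|^{*2}}{2}\,ds\right) \;-\; \int_1^t \!\left(K - \tfrac{\tau^2}{2}\bar R\right)_{\!*} \exp\!\left(-\!\int_\tau^t \tfrac{s|M|^{*2}}{2}\,ds\right) d\tau.
\ee
The hypothesis $\phi^{-2} > C_0$ together with \eqref{eq:K-1} forces the right-hand side to remain strictly positive for all $t \geq 1$, giving a uniform upper bound on $u$. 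A lower bound on $u$ (equivalently upper bound on $w$) follows from another maximum principle argument, since the ``source" $\tfrac{1}{2}(2K - t^2\bar R)$ is integrable by the decay hypothesis on $\bar R$. Higher-order Hölder estimates then come from standard interior parabolic Schauder theory applied on cylinders $[t,4t]\times\Sigma$, using precisely the $\alpha$-Hölder scaling hypothesis $\|\bar R t^2\|_{\alpha, I_t} \leq C/t$; bootstrapping yields global $C^{2+\alpha}$ regularity and hence long-time existence by continuation.

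For asymptotic flatness and finite ADM mass, I would exploit the exponential convergence $g_t \to g_\infty$ and $|M| \to 0$ from Hamilton and Chow, combined with the integrability $\int_1^\infty |\bar R|^* t^2\, dt < \infty$. These ensure that both the forcing term $t^2|M|^2 u$ and the $t^2\bar R$ contribution in \eqref{eq:RFu} are integrable in $t$, forcing $u(t,\cdot) \to 1$ with a definite rate; consequently $g_{\bar R} = u^2 dt^2 + t^2 g_t$ becomes a small perturbation of a round metric of area $4\pi t^2$, which is the model asymptotically flat end. The ADM mass formula \eqref{eq:ADM} then follows by matching to nearly round coordinate spheres via the result of Shi--Wang--Wu cited above, and its finiteness reduces to the integrability of $t(1-u^{-2})$, which follows from the decay estimates just established. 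I expect the main obstacle to be the barrier construction in Step 3: the coupling of the Laplacian to the cubic nonlinearity means one must work with $w = u^{-2}$ to get a favorable sign, and verifying that the precise constant $C_0$ is sharp for the maximum principle comparison requires careful bookkeeping of the integrating factor and initial data.
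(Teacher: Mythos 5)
This theorem is imported from \cite{Lin13} (Theorems 11--13 there) and the present paper supplies no proof of it, so there is no internal argument to compare against; the only trace of the proof visible here is the invocation of ``Lemma 10 in \cite{Lin13}'' in Section~\ref{sect-M-R}, which is precisely the maximum-principle bound on $u^{-2}$ that you place at the heart of your sketch. Your overall strategy --- short-time existence by Schauder theory and contraction, propagation of two-sided bounds via $w=u^{-2}$, continuation, dyadic interior Schauder estimates exploiting the weighted H\"older hypothesis on $\bar R t^2$, and the Shi--Wang--Wu result for the ADM limit along nearly round spheres --- is the same as the cited construction.

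One concrete correction is needed in your key displayed inequality. Integrating $(tw)_t\ge -\tfrac{t^2}{2}|M|^2 w+\bigl(K-\tfrac{t^2}{2}\bar R\bigr)$ at a spatial minimum of $w$ with the integrating factor $\exp\bigl(\int_1^t \tfrac{s|M|^{*2}}{2}ds\bigr)$ gives
\be
t\,w_*(t)\ \ge\ \phi^{-2}\,E(1,t)\ +\ \int_1^t \Bigl(K-\tfrac{\tau^2}{2}\bar R\Bigr)_{*}\,E(\tau,t)\,d\tau,
\ee
with $E$ as in (\ref{E-tau}): a \emph{plus} sign on the integral and a factor of $t$ on the left, not the minus sign and bare $w_*$ you wrote. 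The sign is not cosmetic. The constant $C_0$ in (\ref{eq:K-1}) bounds $-\int_1^t(K-\tfrac{\tau^2}{2}\bar R)_*\exp(\int_1^\tau\tfrac{s|M|^{*2}}{2}ds)\,d\tau$ from \emph{above}, i.e.\ it bounds the integral itself from \emph{below} by $-C_0$, so positivity of the right-hand side follows from $\phi^{-2}>C_0$ only in the plus-sign form, yielding $t\,w_*(t)\ge(\phi^{-2}-C_0)E(1,t)>0$; with your minus sign the hypothesis $\phi^{-2}>C_0$ gives no conclusion at all. A second, smaller imprecision: the source term $K-\tfrac{t^2}{2}\bar R$ in the complementary bound tends to $1$ and is therefore bounded but \emph{not} integrable; what closes that step is $K^*\le 1+Ce^{-ct}$ (Hamilton--Chow) together with $\int_1^\infty t^2|\bar R|^*\,dt<\infty$, which gives $u^{-2}\le 1+O(1/t)$ and hence both the positive lower bound on $u$ and the boundedness of $t(1-u^{-2})$ needed for finiteness of the ADM mass.
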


Here we consider only the special case in which $\Sigma$ is CMC so $\phi$ is a constant. Thus

\begin{thm} \cite{Lin13} \label{simplify}
Assume that $\bar{R}\in C^{\alpha}(M_{\bar{R}})$ satisfying the decay conditions (\ref{barR dec}).
 Then if the mean curvature $H$ of $\Sigma$ satisfies 
 \be \label{eq:ic-11}
H > 2\sqrt{C_0}
\ee
there is a unique positive solution $u\in C^{2+\alpha}(M_{\bar{R}})$
of (\ref{eq:RFu}) with the initial condition
\begin{equation}
u(1,\cdot)=2/H.
\end{equation}
 Then $g_{\bar{R}}$ satisfies the asymptotically flat condition for $t>t_{0}$,
where $t_{0}$ is  some fixed constant with finite ADM mass 
and 
\begin{equation}
\mathrm{m}_{\mathrm{ADM}}(M_{\bar{R}})=\lim_{t\rightarrow\infty}\frac{1}{4\pi}\oint_{\Sigma_{t}}\frac{t}{2}(1-u^{-2})d\sigma.\label{eq:ADM}
\end{equation}
\end{thm}

Theorem~\ref{simplify} immediately implies the existence
of a unique $M_{\bar{R}}$ as described in Definition~\ref{defn-Ham}.

\section{Asphericity Mass}\label{sect-aS}

Here we prove Lemma~\ref{aSt-inc} and Lemma~\ref{aS-finite} which validate the definition of aspherical mass given in Definition~\ref{defn-aS}.   We then prove the key Proposition~\ref{aS=0} which proves the asphericity mass is 0 if and only if the surface is a rescaled standard sphere.  

\begin{lem}\label{aSt-inc} 
If $\Sigma$ is diffeomorphic to a sphere, 
then
 $m_{a\mathbb{S}}(\Sigma,t)$ is nonnegative and increasing in $t$.  
 \end{lem}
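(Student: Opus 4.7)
The plan is to verify nonnegativity and monotonicity of $m_{a\mathbb{S}}(\Sigma,t)$ separately, using Hamilton's modified Ricci flow together with Gauss--Bonnet.

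For nonnegativity, the main ingredient is that the modified flow preserves area. Tracing the flow equation $\partial_\tau g_{ij}=(r-2K)g_{ij}+2D_iD_jf$ and using $r=2$, Gauss--Bonnet $\int_{\Sigma_\tau} K\,d\mu=4\pi$, and $\int \Delta f\,d\mu=0$, I find $\tfrac{d}{d\tau}\Area(\Sigma_\tau)=0$. Starting from $\Area(\Sigma_1)=4\pi$, the area equals $4\pi$ for all $\tau\geq 1$, so Gauss--Bonnet forces the average Gauss curvature on $\Sigma_\tau$ to be exactly $1$, hence $K_*(\tau)\leq 1$. Combined with $E(\tau,t)>0$, the integrand $(1-K_*(\tau))E(\tau,t)$ is pointwise nonnegative, and therefore $m_{a\mathbb{S}}(\Sigma,t)\geq 0$.

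For monotonicity, I would differentiate $m_{a\mathbb{S}}(\Sigma,t)$ in $t$, using $E(t,t)=1$ and $\partial_t E(\tau,t)=-\tfrac{t|M|^{*2}(t)}{2}E(\tau,t)$, to obtain the linear ODE
\[\frac{d}{dt}m_{a\mathbb{S}}(\Sigma,t)=\frac{1-K_*(t)}{2}-\frac{t|M|^{*2}(t)}{2}\,m_{a\mathbb{S}}(\Sigma,t),\qquad m_{a\mathbb{S}}(\Sigma,1)=0.\]
Equivalently, using the semigroup identity $E(\tau,t_2)=E(\tau,t_1)E(t_1,t_2)$ together with $1-E(t_1,t_2)=\int_{t_1}^{t_2}\tfrac{s|M|^{*2}(s)}{2}E(s,t_2)\,ds$, one obtains the increment formula
\[m_{a\mathbb{S}}(\Sigma,t_2)-m_{a\mathbb{S}}(\Sigma,t_1)=\int_{t_1}^{t_2}\!\left[\frac{1-K_*(s)}{2}-m_{a\mathbb{S}}(\Sigma,t_1)\frac{s|M|^{*2}(s)}{2}\right]\!E(s,t_2)\,ds,\]
so monotonicity is equivalent to the a priori bound $m_{a\mathbb{S}}(\Sigma,t)\leq (1-K_*(t))/(t|M|^{*2}(t))$ (with the convention that the right side is $+\infty$ where $|M|^{*2}$ vanishes).

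This bound is the main obstacle. My plan to establish it is by integration by parts in the definition of $m_{a\mathbb{S}}(\Sigma,t)$: writing $(1-K_*(\tau))E(\tau,t)=\tfrac{2(1-K_*(\tau))}{\tau|M|^{*2}(\tau)}\partial_\tau E(\tau,t)$ and integrating by parts expresses $m_{a\mathbb{S}}(\Sigma,t)$ as $(1-K_*(t))/(t|M|^{*2}(t))$ minus boundary and remainder terms whose signs are controlled by the $\tau$-derivative of the auxiliary ratio $(1-K_*(\tau))/(\tau|M|^{*2}(\tau))$. In view of Hamilton and Chow's exponential convergence--under which $|M|^2$ decays at twice the rate of $1-K$--this ratio is nondecreasing in $\tau$ at least asymptotically, and the delicate step is to verify it uniformly for all $\tau\geq 1$, with appropriate care at points where $|M|^{*2}$ may vanish. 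Once established, this yields the desired comparison and completes the argument.
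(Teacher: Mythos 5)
Your nonnegativity argument is correct and is essentially the paper's: the normalized flow preserves the area form, so Gauss--Bonnet keeps the mean value of $K$ equal to $1$, hence $K_*(\tau)\le 1$, and together with $0<E(\tau,t)\le 1$ the integrand is pointwise nonnegative. The genuine gap is in the monotonicity half, and its source is a misreading of Definition~\ref{defn-aS}. The integrand there is
\be
1-K_*(\tau)\,E(\tau,t),
\ee
that is, one minus the \emph{product}, not $\bigl(1-K_*(\tau)\bigr)E(\tau,t)$. The paper's own computations force this parse: Lemma~\ref{aS-finite} splits the integrand as $\bigl(1-E(\tau,t)\bigr)+\bigl(1-K_*(\tau)\bigr)E(\tau,t)$, an identity only under the product reading, and the proof of Theorem~\ref{thm-M-R} identifies the quantity $\tfrac12\int_1^t\bigl(1-K_*E(\tau,t)\bigr)\,d\tau$ coming from the maximum-principle bound on $u^{-2}$ with $m_{a\mathbb{S}}(\Sigma,t)$. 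Under the correct parse the lemma is soft, and this is the paper's two-line proof: the integrand is nonnegative, and for each fixed $\tau$ the factor $E(\tau,t)$ is non-increasing in $t$, so $1-K_*(\tau)E(\tau,t)$ is non-decreasing in $t$ wherever $K_*(\tau)\ge 0$; thus raising $t$ both enlarges the domain of integration over a nonnegative integrand and pushes the integrand up. (Strictly speaking, times with $K_*(\tau)<0$, where the integrand exceeds $1$ but decreases in $t$, need an extra word; the paper elides this case as well.)

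Under your parse, by contrast, the statement becomes a genuinely quantitative assertion about the flow, and your proof of it is incomplete by your own account. Your ODE reduction
\be
\frac{d}{dt}m_{a\mathbb{S}}(\Sigma,t)=\frac{1-K_*(t)}{2}-\frac{t|M|^{*2}(t)}{2}\,m_{a\mathbb{S}}(\Sigma,t)
\ee
and the increment formula are correct for your version of the quantity, but they only convert monotonicity into the a priori bound $m_{a\mathbb{S}}(\Sigma,t)\le(1-K_*(t))/\bigl(t|M|^{*2}(t)\bigr)$, which is exactly what remains unproven. The integration-by-parts plan does not close this: monotonicity of the ratio $(1-K_*(\tau))/\bigl(\tau|M|^{*2}(\tau)\bigr)$ is not a consequence of Hamilton's or Chow's estimates, which give exponential \emph{upper} bounds on $|M|$ and $1-K$ but no pointwise lower bounds and no monotonicity of such quotients; note also that $|M|^{*2}(s)=0$ forces $g_s$ to be a round soliton metric, hence $K_*(s)=1$, so at such times the ratio is $0/0$ rather than $+\infty$ and your convention does not help. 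As written, the proposal does not prove the lemma; reading the definition as the paper intends dissolves the difficulty entirely.
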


\begin{proof}
Since $\Sigma$ is diffeomorphic to a sphere, the Gauss-Bonnet Theorem implies that $\frac{\oint_{\Sigma} K d\sigma}{\oint_{\Sigma}d\sigma}=1$. Thus, $K_* \leq 1$. Together with the fact that 
\be
{\color{blue} 0< }E(\tau,t)=exp\left({-\int_{\tau}^{t}\frac{s\left|M\right|^{*2}(s)}{2}ds}\right) <1, 
\ee 
we see that the integrant of $m_{a\mathbb{S}}(\Sigma,t)$ is nonnegative:
\be
1-K_*exp\left({-\int_{\tau}^{t}\frac{s\left|M\right|^{*2}(s)}{2}ds}\right) 
\geq 1-exp\left({-\int_{\tau}^{t}\frac{s\left|M\right|^{*2}(s)}{2}ds}\right) 
\geq 0.
\ee
Therefore, $m_{a\mathbb{S}}(\Sigma,t)$ is nonnegative and increasing in $t$. 
\end{proof}
 
 \begin{lem}\label{aS-finite}
 The asphericity mass is finite and the limit exists for any $(\Sigma, g_1)$ such that $\Sigma$
is diffeomorphic to a sphere.   
\end{lem}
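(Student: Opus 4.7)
The plan is to construct a $t$-independent integrable majorant for the integrand of $m_{a\mathbb{S}}(\Sigma,t)$, using the exponential convergence estimates from Hamilton's modified Ricci flow, and then combine this uniform bound with the monotonicity established in Lemma~\ref{aSt-inc} to conclude via monotone convergence that the limit exists and is finite.

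First I would invoke Chow's theorem to reduce to a time $\tau_0 \geq 1$ after which the flow has strictly positive Gauss curvature, at which point Hamilton's exponential convergence estimates apply: there exist constants $c, C > 0$ depending only on $g_1$ such that $|1 - K_\tau(x)| \leq C e^{-c\tau}$ and $|M_\tau(x)|^2 \leq C e^{-c\tau}$ for all $x \in \Sigma$ and $\tau \geq \tau_0$. On the compact time interval $[1, \tau_0]$ everything is smooth and uniformly bounded. As an immediate consequence, both $\tau \mapsto |1 - K_*(\tau)|$ and the weight $\tau \mapsto \tau |M|^{*2}(\tau)$ are integrable on $[1, \infty)$.

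Next I algebraically split the integrand as
\[
1 - K_*(\tau)\,E(\tau,t) \;=\; \bigl(1 - K_*(\tau)\bigr) \;+\; K_*(\tau)\bigl(1 - E(\tau,t)\bigr).
\]
The first summand is dominated pointwise by $|1-K_*(\tau)| \in L^1([1,\infty))$, independently of $t$. For the second summand the elementary inequality $1 - e^{-x} \leq x$ for $x \geq 0$ yields
\[
1 - E(\tau,t) \;\leq\; \int_\tau^t \frac{s\,|M|^{*2}(s)}{2}\,ds \;\leq\; \int_\tau^\infty \frac{s\,|M|^{*2}(s)}{2}\,ds,
\]
whose right-hand side is a $t$-independent function of $\tau$; by Fubini its integral over $[1,\infty)$ reduces to $\int_1^\infty (s-1)\,s|M|^{*2}(s)/2\,ds$, which is finite because of the exponential decay. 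Since $K_*(\tau)$ is uniformly bounded (at most $1$ by Gauss-Bonnet applied to a topological sphere, and bounded below by the convergence $K_* \to 1$), the second summand also admits a $t$-independent $L^1$ majorant.

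Combining these two bounds gives $m_{a\mathbb{S}}(\Sigma,t) \leq C_1$ for all $t \geq 1$. Since Lemma~\ref{aSt-inc} shows $t \mapsto m_{a\mathbb{S}}(\Sigma,t)$ is nonnegative and nondecreasing, the monotone convergence theorem yields that $m_{a\mathbb{S}}(\Sigma) = \lim_{t \to \infty} m_{a\mathbb{S}}(\Sigma,t)$ exists and is a finite nonnegative number. The only real obstacle is the need for decay of $|M|^{*2}$ that makes $\tau |M|^{*2}(\tau)$ integrable, which is provided directly by the Hamilton-Chow exponential convergence; without this input one could not control $1 - E(\tau,t)$ uniformly in $t$.
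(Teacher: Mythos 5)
Your proposal is correct and follows essentially the same route as the paper: both split the integrand into a piece controlled by $1-K_*$ and a piece controlled by $1-E(\tau,t)$, bound the latter by $\int_\tau^\infty \tfrac{s|M|^{*2}(s)}{2}\,ds$ via an elementary exponential inequality, invoke the Hamilton--Chow exponential decay of $|M|$ and $1-K$ to get a $t$-independent bound, and conclude from the monotonicity of Lemma~\ref{aSt-inc}. Your use of $1-e^{-x}\le x$ is marginally cleaner than the paper's $|e^x-1|\le 2|x|$ for $|x|\le 1$, but the argument is the same.
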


\begin{proof}
From Lemma \ref{aSt-inc}, we have $m_{a\mathbb{S}}(\Sigma,t)$ is increasing. To show the limit $m_{a\mathbb{S}}(\Sigma)= \lim_{t\to \infty} m_{a\mathbb{S}}(\Sigma,t)$ exists, it suffices to show that $m_{a\mathbb{S}}(\Sigma,t)$ is bounded from above.
First observe that
\begin{eqnarray*}
m_{a\mathbb{S}}(\Sigma,t) 
&=& \frac{1}{2}  \int_{1}^{t}1-K_*(\tau)E(\tau,t) d\tau
{\color{blue} + \frac{1}{2} \int_1^t \frac {\tau |M|^{*2}}{2} E(\tau, t)d\tau}
 \\
&\leq&  \frac{1}{2} \int_{1}^{t} 1- exp\left({-\int_{\tau}^{t}\frac{s\left|M\right|^{*2}(s)}{2}ds}\right) d\tau +  \frac{1}{2}  \int_{1}^{t} \left( 1 - K_*(\tau)\right) 
E(\tau,t)d\tau
{\color{blue} + \frac{1}{2} \int_1^t \frac {\tau |M|^{*2}}{2} E(\tau, t)d\tau}.
\end{eqnarray*}
Using the fact that $E(\tau, t)\le 1$ and
\be \label{e-x-1}
|e^x -1| \leq 2|x| \quad \mbox{for } |x|\leq 1,
\ee
we see that
\begin{eqnarray*}
m_{a\mathbb{S}}(\Sigma,t) 
&\leq& \int_1^t \int_{\tau}^t \frac{s\left|M\right|^{*2}(s)}{2}ds d\tau + \frac{1}{2}  \int_1^t \left( 1 - K_*(\tau) \right) d\tau {\color{blue} + \frac{1}{2} \int_1^t \frac {\tau |M|^{*2}}{2} E(\tau, t)d\tau}\\
&\leq&  \frac{1}{2}  \int_1^t (s-1)s|M|^{*2}(s)ds + \frac{1}{2}  \int_1^t \left( 1 - K_*(\tau) \right) d\tau {\color{blue} + \frac{1}{2} \int_1^t \frac {\tau |M|^{*2}}{2} E(\tau, t)d\tau}\\
&\leq& C. 
\end{eqnarray*}
since $|M|$ and $1-K$ converge to $0$ exponentially under the modified Ricci flow \cite{Cho91, Ham88}. Hence, the lemma follows from the monotonic sequence theorem.  
\end{proof}

\begin{prop}\label{aS=0}
We have $m_{a\mathbb{S}}(\Sigma)=0$ if and only if $(\Sigma,g_1)$ is isometric to a rescaled standard sphere
$(\mathbb{S}^2, g_{\mathbb{S}^2})$. 
\end{prop}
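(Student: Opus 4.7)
The proposition is an if-and-only-if, so I would handle each direction separately, and both should be short given Lemma~\ref{aSt-inc} and what is known about the flow.

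For the easy direction, assume $(\Sigma, g_1)$ is already the standard sphere of area $4\pi$. Then $K_1\equiv 1$, so the Gauss--Bonnet normalization gives $r_1=2$ and the equation $\Delta f = 2K - r$ forces $f\equiv 0$. Consequently $M_{ij} = (1-K)g_{ij} + D_iD_j f \equiv 0$ at $t=1$, and the evolution equation \eqref{eq:RF} reduces to $\partial_t g_{ij}=0$. Uniqueness of solutions to Hamilton's modified Ricci flow (Hamilton's theorem quoted in Section~\ref{sect-review}) then gives $g_t\equiv g_1$ for all $t\ge 1$, hence $K_*(\tau)\equiv 1$ and $|M|^{*2}(s)\equiv 0$ for all $\tau,s\ge 1$. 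Substituting into the definition, $E(\tau,t)\equiv 1$ and the integrand is identically zero, so $m_{a\mathbb{S}}(\Sigma,t)=0$ for every $t$ and the limit vanishes.

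For the nontrivial direction, suppose $m_{a\mathbb{S}}(\Sigma)=0$. By Lemma~\ref{aSt-inc}, $t\mapsto m_{a\mathbb{S}}(\Sigma,t)$ is nondecreasing and nonnegative, so it must equal zero identically on $[1,\infty)$. The integrand $1-K_*(\tau)E(\tau,t)$ is continuous in $\tau$ (as $K_\tau$ and the tensor $M_s$ depend continuously on their parameters and $\Sigma$ is compact, $K_*(\tau)$ and $|M|^{*2}(s)$ are continuous) and is nonnegative by the argument in Lemma~\ref{aSt-inc}. A nonnegative continuous function with zero integral vanishes identically, so
\[
K_*(\tau)\,E(\tau,t) \;=\; 1 \qquad \text{for all } 1\le \tau \le t.
\]
Since $K_*(\tau)\le 1$ (Gauss--Bonnet on an area-$4\pi$ topological sphere) and $E(\tau,t)\le 1$, both factors must equal $1$. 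In particular evaluating at $\tau=1$ yields $K_*(1)=1$. Equivalently, one can dispense with the continuity-of-the-integrand step by dividing $m_{a\mathbb{S}}(\Sigma,1+\varepsilon)=0$ by $\varepsilon$ and letting $\varepsilon\to 0^+$, obtaining $\tfrac{1}{2}(1-K_*(1))=0$ via the fundamental theorem of calculus.

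From $K_*(1)=1$ we get $K_1\ge 1$ pointwise on $\Sigma$; but the mean value of $K_1$ equals $1$ by Gauss--Bonnet and the area normalization, so $K_1\equiv 1$. A simply connected (indeed, sphere-diffeomorphic) surface of constant Gauss curvature $1$ and area $4\pi$ is isometric to $(\mathbb{S}^2,g_{\mathbb{S}^2})$ by the Killing--Hopf theorem, completing the proof. The only real subtlety is justifying continuity of $K_*(\tau)$ in $\tau$ down to $\tau=1$, which follows immediately from the fact that $K_\tau$ depends continuously on $\tau$ in $C^0$ (standard parabolic regularity for the modified Ricci flow) and that the infimum of a continuous family of continuous functions on a fixed compact set is continuous; this is the only step where I anticipate needing to invoke regularity from \cite{Ham88, Cho91}, and it is routine.
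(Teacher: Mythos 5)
Your proof is correct and follows essentially the same route as the paper's: both directions reduce the vanishing of $m_{a\mathbb{S}}(\Sigma)$ to the pointwise vanishing of the nonnegative integrand, deduce $K_*=1$, and combine this with the Gauss--Bonnet normalization to force $K\equiv 1$ and hence a round sphere. Your added care about continuity of $K_*(\tau)$ (or the alternative via the fundamental theorem of calculus at $\tau=1$) fills a small step the paper leaves implicit, and your use of Killing--Hopf in place of the Uniformization Theorem is an inessential variation.
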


\begin{proof}
Suppose that $(\Sigma,g_1)$ is isometric to $(\mathbb{S}^2, g_{\mathbb{S}^2})$. $|M|\equiv 0$ and $K \equiv 1$ under the Ricci flow. Thus,  $m_{a\mathbb{S}}(\Sigma)=0$. 

Suppose $m_{a\mathbb{S}}(\Sigma)=0$. Since $m_{a\mathbb{S}}(\Sigma,t)$ is nonnegative and increasing in $t$, we have that
\be 
1-K_*(\tau)exp\left({-\int_{\tau}^{t}\frac{s\left|M\right|^{*2}(s)}{2}ds}\right) {\color{blue} + \frac {\tau |M|^{*2}}{2} E(\tau, t) }=0
\ee 
for all $t$ and $\tau$. 
{\color{blue}
Since $$1-K_*(\tau)exp\left({-\int_{\tau}^{t}\frac{s\left|M\right|^{*2}(s)}{2}ds}\right) \geq 0$$ and $$\frac {\tau |M|^{*2}}{2} E(\tau, t) \geq 0,$$ we have 
$$1-K_*(\tau)exp\left({-\int_{\tau}^{t}\frac{s\left|M\right|^{*2}(s)}{2}ds}\right) = 0$$ and $$\frac {\tau |M|^{*2}}{2} E(\tau, t) =0 \textrm{ for all $t$ and $\tau$}.$$ 
}
 It forces that $K = 1$ {\color{blue} and $|M|=0$} for all $t$ and that $(\Sigma,g_1)$ is isometric to a standard sphere by the Uniformization Theorem. 
\end{proof}

\section{The $\Ham$ class of spaces}\label{sect-Ham}

In this section, we study the class of asymptotically flat 
three dimensional Riemannian manifolds foliated by Hamilton's modified Ricci flow defined in Definition~\ref{defn-Ham}.
Recall that the first author has already shown
the existence
of a unique $M_{\bar{R}}$ as described in Definition~\ref{defn-Ham}
(c.f. Theorem~\ref{simplify}).
We now prove this class of spaces contains many
interesting classes of spaces [Lemma~\ref{K-R}, Lemma~\ref{positive-0}] including rotationally symmetric spaces
[Proposition~\ref{prop-rot-sym}].

\begin{prop} \label{prop:adm ext}
Let $(\Sigma, g_1, H)$, $H>0$ be the boundary of a compact manifold of dimension three with nonnegative scalar curvature. For any $M_{\bar{R}}\in \Ham(\Sigma, g_1, H)$,  \textcolor{blue}{ satisfying (\ref{R-bound})
and (\ref{1.22}),} there is no closed minimal surface in $M_{\bar{R}}$. Moreover, $M_{\bar{R}} \in \Ham^0$ is an admissible extension.  
\end{prop}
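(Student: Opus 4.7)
My strategy is to exploit the natural foliation $\{\Sigma_t\}_{t\geq 1}$ of $M_{\bar R}$ together with the geometric maximum principle for mean curvatures. The key observation is that each foliation leaf has strictly positive mean curvature. Using $g_{\bar R}=u^{2}dt^{2}+t^{2}g_t$, the outward unit normal to $\Sigma_t$ is $\nu=u^{-1}\partial_t$ and the induced metric is $h_t=t^{2}g_t$; since Hamilton's flow satisfies $\partial_t g_t=2M$ with the $2$-tensor $M$ trace-free ($g^{ij}M_{ij}=2(1-K)+\Delta f=2-r=0$), the second fundamental form $A_{ij}=\tfrac{1}{2u}\partial_t h_{ij}=\tfrac{t}{u}g_{t,ij}+\tfrac{t^{2}}{u}M_{ij}$ has trace
\be
H(\Sigma_t)=h^{ij}A_{ij}=\frac{2}{t\,u(t,\cdot)}>0,
\ee
where strict positivity follows from $u>0$ on all of $M_{\bar R}$ (Theorem~\ref{simplify}). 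Setting $t=1$ recovers the prescribed boundary mean curvature $H$.

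The first assertion then follows by contradiction. Suppose $S\subset M_{\bar R}$ were a closed minimal surface. Since $S$ is compact, the continuous function $t$ attains a maximum $t^*\geq 1$ on $S$ at some point $p$. If $t^*=1$, then $S\subset\Sigma_1$, and as $\Sigma_1$ is a connected closed two-manifold, $S=\Sigma_1$; but $\Sigma_1$ has mean curvature $H>0$, contradicting $H(S)\equiv 0$. If $t^*>1$, then near $p$ the surface $S$ lies on the inner side $\{t\leq t^*\}$ of $\Sigma_{t^*}$ and touches $\Sigma_{t^*}$ tangentially at $p$ with a common outward normal. Expressing both surfaces as graphs over their common tangent plane $T_pS$, the touching principle for mean curvature yields
\be
H(S)(p)\geq H(\Sigma_{t^*})(p)=\frac{2}{t^*u(t^*,p)}>0,
\ee
again contradicting minimality.

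For the second assertion I would glue: set $\overline M=\Omega\cup_\Sigma M_{\bar R}$, identifying $\partial\Omega$ with $\Sigma_1$. The boundary condition $u(1,x)=2/H_x$ guarantees that the induced metric and the scalar mean curvature agree across $\Sigma$ from both sides, so by Shi-Tam \cite{ShiTam02} and Miao \cite{Miao02} the resulting Lipschitz metric on $\overline M$ has nonnegative scalar curvature in the distributional sense (using $\bar R\geq 0$ in $M_{\bar R}$ and the given nonnegativity on $\Omega$). Asymptotic flatness with finite ADM mass is supplied by Theorem~\ref{simplify}, and the first assertion, together with the standing assumption that $\Omega$ contains no closed interior minimal surface, rules out any closed interior minimal surface in $\overline M$. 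Hence $\overline M\in\mathcal{PM}$ and $M_{\bar R}$ serves as an admissible extension.

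The main technical obstacle I anticipate is the careful application of the touching principle so that strict positivity of $H(\Sigma_{t^*})$ really transfers to strict positivity of $H(S)(p)$, with consistent orientation conventions. The underlying geometric miracle making the whole scheme work is that trace-freeness of $M$ produces the remarkably clean identity $H(\Sigma_t)=2/(tu)$, independent of the prescribed scalar curvature $\bar R$.
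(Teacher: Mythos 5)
Your proof is correct and takes essentially the same approach as the paper: a foliation by leaves of positive mean curvature $H(\Sigma_t)=2/(tu)$ combined with the tangency (touching) principle at an extremal leaf meeting the putative minimal surface. You work at the largest $t$ on $S$ where the paper takes the smallest tangent leaf, and you spell out the mean curvature computation and the Shi--Tam/Miao gluing for admissibility in more detail than the paper does, but the argument is the same.
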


\begin{proof}
We apply the tangency principle (\cite[Theorem 1.1]{FS01}) by Fonrtenele and Silva. 

Suppose there is a closed minimal surface $S$. There must exist a smallest $t_0$ so that $\Sigma_{t_0}$ is tangent to $S$ at a point $p$. By the assumption $H>0$ and Theorem \ref{thm: existence}, there exists a unique positive solution $u$ and hence mean curvature $H(p)=2/t_0u(p)$ on $\Sigma_{t_0}$ is positive. By the maximum principle (tangency principle),  $S$ and $\Sigma_{t_0}$ coincide in a neighborhood of $p$, which is impossible. 

\end{proof}

\begin{lem}\label{K-R}
Let $(\Sigma, g_1, H)$, $H>0$ be the boundary of a compact manifold of dimension three with nonnegative scalar curvature. Let $\bar{R}$ be
any prescribed scalar curvature satisfying
(\ref{R-decays}) and
\be\label{a}
\bar{R}_{(x,t)} < 2 K_{(x,t)}/t^2
\ee
where $K_{(x,t)}$ is the Gauss curvature of $(\Sigma, g_t)$
obtained by Hamilton's modified Ricci flow, then
we obtain 
\be
M_{\bar{R}} \in \Ham(\Sigma, g_1,H)\in \Ham
\ee
\end{lem}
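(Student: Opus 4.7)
The plan is to verify directly that the scalar-energy condition (\ref{R-bound}) holds trivially under the hypothesis, in fact with $C_0(\bar{R}) = 0$. Once this is done, membership of $M_{\bar R}$ in $\Ham(\Sigma, g_1, H)$ is immediate from the existence result of the first author (Theorem~\ref{thm: existence}, or Theorem~\ref{simplify} in the CMC case), because the decay conditions (\ref{R-decays}) are already part of the hypothesis.

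First I would rewrite the pointwise hypothesis $\bar{R}_{(x,t)} < 2K_{(x,t)}/t^2$ in the equivalent form
\begin{equation*}
\frac{t^2}{2}\bar{R}(x,t) - K(x,t) < 0 \qquad \text{for every } x\in \Sigma \text{ and every } t\ge 1.
\end{equation*}
Taking the supremum over $x \in \Sigma_\tau$ of the left-hand side gives $\bigl(\tfrac{\tau^2}{2}\bar{R} - K\bigr)^{\!*}(\tau) \le 0$ for every $\tau \ge 1$. Since the exponential weight $\exp\!\bigl(\int_1^\tau \tfrac{s|M|^{*2}}{2}\,ds\bigr)$ is strictly positive, the integrand appearing in (\ref{R-bound}) is non-positive on $[1,\infty)$. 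It follows that the function
\begin{equation*}
F(t) := \int_1^t \Bigl(\tfrac{\tau^2}{2}\bar{R} - K\Bigr)^{\!*} \exp\!\left(\int_1^\tau \tfrac{s|M|^{*2}}{2}\,ds\right) d\tau
\end{equation*}
is non-increasing in $t$ and satisfies $F(1) = 0$, hence $F(t) \le 0$ for all $t \ge 1$. Taking the supremum over $t$ therefore yields $C_0(\bar{R}) = \sup_{t \ge 1} F(t) = 0$.

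Because $H > 0$ on the compact surface $\Sigma$, we have $\min_\Sigma H > 0$ and so $H^2/4 > 0 = C_0(\bar R)$, which is exactly the scalar-energy bound required by (\ref{R-bound}). The decay conditions (\ref{R-decays}) hold by assumption, so all the hypotheses of Theorem~\ref{thm: existence} are met. Applying that theorem with initial datum $\phi(x) = 2/H_x$ produces a unique positive solution $u \in C^{2+\alpha}(M_{\bar R})$ of the parabolic equation (\ref{eq:RFu}), and the resulting metric $g_{\bar R} = u^2\,dt^2 + t^2 g_t$ is asymptotically flat with finite ADM mass. This is precisely the data of Definition~\ref{defn-Ham}, so $M_{\bar R} \in \Ham(\Sigma, g_1, H) \subset \Ham$.

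There is essentially no obstacle here once the hypothesis is recast as a sign condition on $\tfrac{t^2}{2}\bar R - K$: the pointwise negativity of the integrand forces $C_0(\bar R) = 0$, which leaves arbitrary room beneath $H^2/4$. The only mild subtlety is making sense of the bound $C_0(\bar{R}) < H^2/4$ when $H$ is non-constant, but this is resolved by the compactness of $\Sigma$ and the hypothesis $H > 0$, which together supply a uniform positive lower bound for $H^2/4$.
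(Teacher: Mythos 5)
Your proof is correct and follows essentially the same route as the paper's: the pointwise hypothesis (\ref{a}) makes the integrand in the definition of $C_0(\bar{R})$ nonpositive, forcing $C_0(\bar{R})=0<H^2/4$, after which Theorem~\ref{thm: existence} applies. You simply spell out the sign argument and the role of $F(1)=0$ more explicitly than the paper does.
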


\begin{proof}
This follows immediately because the assumption in (\ref{a}) which implies the integrand
in the definition of $C_0(\bar{R})$ is nonpositive. The positive mean curvature implies (\ref{R-bound}) which is equivalent to condition (\ref{eq:ic-1}) in Theorem \ref{thm: existence}. By Theorem \ref{thm: existence}, we obtain such a manifold $M_{\bar{R}}$.
\end{proof}

\begin{lem}\label{positive-0}
If
$(\Sigma, g_1)$ with \textcolor{blue}{nonnegative} Gauss curvature
and prescribed $0$ scalar curvature $\bar{R}=0$
then $M_{\bar{R}}$ is defined and $M_{\bar{R}}  \in \Ham^0$. 
\end{lem}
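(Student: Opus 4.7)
The plan is to reduce to Lemma~\ref{K-R}, which supplies a convenient pointwise sufficient condition for membership in $\Ham(\Sigma, g_1, H)$. Since the prescribed scalar curvature vanishes identically, the asymptotic decay conditions (\ref{R-decays}) hold trivially: the weighted integral $\int_1^\infty |\bar{R}|^* t^2\, dt$ and every weighted H\"older norm $\|\bar{R} t^2\|_{C^{0,\alpha}[t,4t]}$ vanish. The only remaining hypothesis of Lemma~\ref{K-R} is the pointwise bound (\ref{a}), which with $\bar{R} \equiv 0$ reduces to the strict inequality $K_{(x,t)} > 0$ for every $t \ge 1$ and every $x \in \Sigma$, where $K$ is the Gauss curvature of the metric $g_t$ produced by Hamilton's modified Ricci flow starting from $g_1$.

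The crux of the argument is therefore to verify that the modified Ricci flow preserves positivity of Gauss curvature along the entire flow, not merely asymptotically. Since $(\Sigma, g_1)$ is diffeomorphic to a sphere and has positive Gauss curvature by hypothesis, Hamilton's theorem recalled in Section~\ref{sect-review} produces a unique flow $g_t$ converging exponentially to a round metric of Gauss curvature $1$; in particular $K(t,\cdot) \to 1$ uniformly. The preservation statement $K(t,\cdot) > 0$ for all $t \ge 1$ is classical and follows from the maximum principle applied to the evolution equation for $K$ under the modified flow; it is established in Hamilton's original paper \cite{Ham88} and in the textbook treatment \cite{CK04}. I would invoke this result directly rather than reprove it, since reproving it would take the exposition well outside the scope of the present section.

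Granted $K > 0$ throughout the flow, Lemma~\ref{K-R} immediately gives $M_{\bar{R}} \in \Ham(\Sigma, g_1, H)$, where the implicit mean curvature $H > 0$ is as in Definition~\ref{defn-Ham}. Since $\bar{R} \equiv 0 \ge 0$, Definition~\ref{defn-Ham} then places $M_{\bar{R}}$ in $\Ham^0(\Sigma, g_1, H) \subset \Ham^0$, completing the proof. The main obstacle is thus isolated in the preservation of positivity of $K$; once that classical input is in hand, the remainder is bookkeeping through the definitions and the sufficient condition supplied by Lemma~\ref{K-R}.
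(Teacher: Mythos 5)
Your proposal is correct and follows essentially the same route as the paper's own proof: verify that $\bar{R}\equiv 0$ trivially satisfies the decay conditions (\ref{R-decays}), invoke Hamilton's result from \cite{Ham88} that the modified Ricci flow preserves positivity of the Gauss curvature so that condition (\ref{a}) holds, and then apply Lemma~\ref{K-R}. The only difference is that you spell out the final bookkeeping step from $\Ham(\Sigma,g_1,H)$ to $\Ham^0$ via $\bar{R}\ge 0$, which the paper leaves implicit.
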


\begin{proof}
Hamilton proved in \cite{Ham88} that
$(\Sigma, g_t)$ has positive Gauss curvature for
all $t\textcolor{blue}{>0}$ and so (\ref{a}) holds for $\bar{R}=0$.
Since $\bar{R}=0$ satisfies (\ref{R-decays}), we
apply Lemma~\ref{K-R} to complete the proof.
\end{proof}

We next prove that the
asymptotically flat rotationally symmetric Riemannian manifolds
of dimension $3$ including the classical rotationally
symmetric gravity wells and black holes lie in $\Ham$:

\begin{defn}  \label{def-rot-sym}
Let $\RotSym$ be the class of complete $3$-dimensional 
asymptotically flat
rotationally symmetric Riemannian manifolds, $(M,g)$,
with
\be
g=(f(r))^2 dr^2 + r^2 g_{S^2}
\ee
of nonnegative scalar curvature $\bar{R}\ge 0$
with no closed interior
minimal hypersurfaces which either have no boundary or have a boundary
which is a stable minimal hypersurface.
\end{defn}

\begin{prop}\label{prop-rot-sym}
If $M\in \RotSym$ is asymptotically flat 
with $r_{min}< 1$ so that
\be\label{mH-decay} 
\exists C>0 \textrm{ such that } |m_H''(r)|< \frac{C}{r^{2}}.
\ee
then
$\bar{R}$ satisfies (\ref{R-decays})
and (\ref{R-bound}).
Thus for any rotationally symmetric
$\Sigma=r^{-1}(t)\in M$ we have 
\be
r^{-1}[t,\infty)\in \Ham.
\ee
\end{prop}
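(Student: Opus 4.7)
The plan is to identify the exterior of the coordinate sphere in $M$ with the $\Ham$ form and then verify both (\ref{R-decays}) and (\ref{R-bound}) directly using the rotationally symmetric formulas. After a standard rescaling of the radial variable, I may assume the initial sphere is $\Sigma = r^{-1}(1)$, which lies in $M$ by the hypothesis $r_{\min} < 1$. I will use the standard identities $\mathrm{m}_{\mathrm{H}}(r) = \tfrac{r}{2}(1 - 1/f(r)^2)$, $H = 2/(rf(r))$, and $\bar{R}(r) = 4\mathrm{m}_{\mathrm{H}}'(r)/r^2$. Since $(\mathbb{S}^2, g_{\mathbb{S}^2})$ is a fixed point of Hamilton's modified Ricci flow, along the flow we have $g_t \equiv g_{\mathbb{S}^2}$, $K \equiv 1$, $|M| \equiv 0$, and consequently $E(\tau,t) \equiv 1$. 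Setting $t = r$ and $u(t) = f(t)$ then puts the metric in the $\Ham$ form $u^2\, dt^2 + t^2 g_t$ with the correct initial condition $u(1) = f(1) = 2/H$.

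For (\ref{R-decays}), the integrability is immediate from asymptotic flatness:
\[
\int_1^\infty |\bar{R}|^* t^2\, dt \;=\; 4\int_1^\infty \mathrm{m}_{\mathrm{H}}'(t)\, dt \;=\; 4(\mathrm{m}_{\mathrm{ADM}} - \mathrm{m}_{\mathrm{H}}(1)) \;<\; \infty,
\]
using $\bar{R} \ge 0$ so that $\mathrm{m}_{\mathrm{H}}' \ge 0$. For the H\"older bound, I would first derive $|\mathrm{m}_{\mathrm{H}}'(r)| \le C/r$ from hypothesis (\ref{mH-decay}): boundedness of $\mathrm{m}_{\mathrm{H}}$ together with $|\mathrm{m}_{\mathrm{H}}''| \le C/r^2$ forces $\mathrm{m}_{\mathrm{H}}'(r) \to 0$ at infinity (a Lipschitz-plus-integrable argument), so $\mathrm{m}_{\mathrm{H}}'(r) = -\int_r^\infty \mathrm{m}_{\mathrm{H}}''(s)\, ds$ yields the pointwise $C/r$-bound. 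Combined with the H\"older-seminorm control $[\mathrm{m}_{\mathrm{H}}']_{\alpha;[t,4t]} \le (3t)^{1-\alpha}\|\mathrm{m}_{\mathrm{H}}''\|_{L^\infty[t,4t]} \le C/t^{1+\alpha}$, this gives $\|\bar{R}t^2\|_{C^{0,\alpha}[t,4t]} \le C/t$ as required.

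For (\ref{R-bound}), the stationary flow reduces the quantity to
\[
C_0(\bar{R}) \;=\; \sup_{t \ge 1} \int_1^t \bigl(2\mathrm{m}_{\mathrm{H}}'(\tau) - 1\bigr)\, d\tau \;=\; \sup_{t \ge 1} \bigl[\,2(\mathrm{m}_{\mathrm{H}}(t) - \mathrm{m}_{\mathrm{H}}(1)) - (t - 1)\,\bigr].
\]
Since the integrand tends to $-1$ at infinity while $\mathrm{m}_{\mathrm{H}}$ is bounded, the supremum is attained at some finite $t^* \in [1, \infty)$. The desired bound $C_0(\bar{R}) < H^2/4 = 1/f(1)^2 = 1 - 2\mathrm{m}_{\mathrm{H}}(1)$ rearranges algebraically to $2\mathrm{m}_{\mathrm{H}}(t^*) < t^*$, equivalently $1/f(t^*)^2 > 0$. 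This holds because $M$ has no interior minimal surfaces in $r \ge 1$, so $f$ is finite and positive throughout, giving strict inequality. Theorem~\ref{simplify} will then produce the unique $M_{\bar{R}}$ and place it in $\Ham$. The main obstacle is the pointwise decay $\mathrm{m}_{\mathrm{H}}'(r) = O(1/r)$ in the H\"older step, which is exactly where hypothesis (\ref{mH-decay}) is used; by contrast, (\ref{R-bound}) reduces by direct algebra to the regularity of the warping factor $f$ outside the horizon.
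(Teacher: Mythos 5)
Your proposal is correct and follows essentially the same route as the paper: reduce to the stationary flow ($K\equiv 1$, $|M|\equiv 0$, $E\equiv 1$), use $\mathrm{m}_{\mathrm{H}}'(t)=t^2\bar{R}(t)/4$ together with boundedness of $\mathrm{m}_{\mathrm{H}}$ for the integrability, hypothesis (\ref{mH-decay}) for the H\"older bound, and the horizon-free condition $2\mathrm{m}_{\mathrm{H}}(t)<t$ to get $C_0(\bar{R})<H^2/4$. Your two refinements --- the pointwise decay $\mathrm{m}_{\mathrm{H}}'(r)=O(1/r)$ to control the sup-norm part of the weighted H\"older norm, and the attainment of the supremum at a finite $t^*$ to justify strictness of $C_0<H^2/4$ --- are welcome extra care at points the paper's proof passes over quickly, but they do not change the argument's structure.
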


\begin{proof}
In \cite{LeeSormani1}, the second author and Lee proved that there
is a one to one correspondance between manifolds 
$M\in \RotSym$ and nondecreasing continuous functions, 
$m_H: [r_{min}, \infty) \to [0,\infty)$ such that 
\be
m_H(t)<t/2, \quad \lim_{t\to\infty} m_H(t)=\mathrm{m}_{\mathrm{ADM}}(M)<\infty, \quad
m_H(r_{min})=0, \textrm{ and }m_H(t)>0 \textrm{ for } t> r_{min}.
\ee
where $m_H(t)$ denotes the Hawking mass
of the level set $r^{-1}(t)$.

Since
\be
m_H'(t)= t^2 \bar{R}(t)/4.
\ee
we have
\be\label{int-to-m}
\frac{t}{2} > m_H(t)=m_H(1)+\int_{1}^t \frac{t^2 \bar{R}(t)}{4}\, dt.
\ee 
Now $K=1$ and $|M|=0$ in the rotationally symmetric case.  So
\begin{eqnarray}
C_0(\bar{R})&=&\sup_{1\leq t<\infty}\left\{ \int_{1}^{t}\left(
\frac{\tau^{2}}{2}\bar{R}-1\right)d\tau\right\} \\
&=&
\sup_{1\leq t<\infty}\left\{2 m_H(t)-2m_H(1) -(t-1)\right\} \\
&<& t -2(1-H^2/4)-(t-1)=H^2/4.
\end{eqnarray}

Since $\lim_{t\to \infty} m_H(t) <\infty$, by (\ref{int-to-m}) we have
\be
\int_{1}^\infty t^2 \bar{R}(t) \, dt <\infty.
\ee
So we have the first part of (\ref{R-decays}).

Now consider  the weighted H\"{o}lder norm:
\begin{eqnarray}
||\bar{R}(t) t^2||_{\alpha, I_r}
&=&\sup\left\{ {\color{black} t_2^{\alpha}} \frac{|\bar{R}(t_1) t_1^2-\bar{R}(t_2) t_2^2|}{|t_1-t_2|^\alpha}     : t_1\neq t_2 \in [r,4r]\right\}\\
&=&\sup\left\{ {\color{black} t_2^{\alpha}} \frac{|4m_H'(t_1)-4m_H'(t_2)|}{|t_1-t_2|^\alpha}     : t_1\neq t_2 \in [r,4r]\right\}\\
&\le&  {\color{black}16\cdot 3^{1-\alpha} r} \sup_{[r,4r]} |m_H''(t)|
\end{eqnarray}
Assume on the contrary that H\"{o}lder part of
(\ref{R-decays}) is false, then
\be
\lim_{r_j\to\infty} r_j ||\bar{R}(t) t^2||_{\alpha, I_{r_j}}=\infty
\ee
and so
\be
\lim_{r_j\to\infty} r_j^{2} \sup_{[r_j,4r_j]} |m_H''(t)| =\infty
\ee
so
\be
\lim_{r_j\to\infty} r_j^{2} |m_H''(r_j)| =\infty
\ee
which contradicts (\ref{mH-decay}).
\end{proof}

\section{Estimating and Minimizing the ADM mass}
\label{sect-M-R}

Here we prove Theorem~\ref{thm-M-R}.  First
we prove Lemmas~\ref{et-inc} and
\ref{e-finite}.

\begin{lem}\label{et-inc}
Given $\bar{R}\geq 0$, we see that
$e_t(M_{\bar{R}}, g_{\bar{R}})$ is nonnegative and increasing in $t$.
\end{lem}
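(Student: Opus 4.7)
The plan parallels the proof of Lemma~\ref{aSt-inc}. Starting from the definition
\[
e_t(M_{\bar R},g_{\bar R})=\tfrac12\int_1^t \tfrac{\tau^2}{2}\bar R^*(\tau)E(\tau,t)\,d\tau,
\]
I first settle nonnegativity. By hypothesis $\bar R\ge 0$ on $M_{\bar R}$, so its pointwise supremum $\bar R^*(\tau)=\sup_{x\in\Sigma}\bar R(\tau,x)\ge 0$ at every $\tau\in[1,t]$. Since $E(\tau,t)>0$ is the exponential of a finite real number and $\tau^2>0$, the integrand is pointwise nonneg, and therefore $e_t\ge 0$ for all $t\ge 1$.

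For monotonicity in $t$, the cleanest route is to differentiate $e_t$ in $t$ by the Leibniz rule. Using $E(t,t)=1$ together with $\partial_t E(\tau,t)=-\tfrac{t|M|^{*2}(t)}{2}E(\tau,t)$, one obtains the linear first-order ODE
\[
\frac{d e_t}{dt}=\frac{t^2}{4}\bar R^*(t)-\frac{t|M|^{*2}(t)}{2}\,e_t,\qquad e_1=0.
\]
Multiplying by the integrating factor $A(t)=\exp\!\bigl(\int_1^t\tfrac{s|M|^{*2}(s)}{2}\,ds\bigr)>0$ converts this to
\[
\tfrac{d}{dt}\bigl(A(t)e_t\bigr)=A(t)\,\tfrac{t^2}{4}\,\bar R^*(t)\ge 0,
\]
so $A(t)e_t$ is nondecreasing on $[1,\infty)$ and starts from $0$. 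The same structure is visible from the semigroup identity $E(\tau,t_2)=E(\tau,t_1)E(t_1,t_2)$ for $\tau\le t_1\le t_2$, which yields the decomposition
\[
e_{t_2}=E(t_1,t_2)\,e_{t_1}+\tfrac12\int_{t_1}^{t_2}\tfrac{\tau^2}{2}\bar R^*(\tau)E(\tau,t_2)\,d\tau.
\]

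The anticipated main obstacle is upgrading the monotonicity of $A(t)e_t$ to monotonicity of $e_t$ itself; equivalently, in the decomposition above one must verify that the nonneg source integral on the right dominates the contraction $(1-E(t_1,t_2))e_{t_1}$ coming from $E(t_1,t_2)<1$. To close this gap I would combine the uniform a priori bound $e_t\le\tfrac14\int_1^\infty\tau^2\bar R^*(\tau)\,d\tau<\infty$ coming from the decay condition \eqref{R-decays} with the exponential decay of $|M|$ under Hamilton--Chow's modified Ricci flow \cite{Ham88,Cho91}. This keeps the subtractive term $\tfrac{t|M|^{*2}(t)}{2}e_t$ controlled by the source term $\tfrac{t^2}{4}\bar R^*(t)$ at each $t\ge 1$, so that $\tfrac{d}{dt}e_t\ge 0$ and the asserted monotonicity follows.
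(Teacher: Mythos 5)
Your nonnegativity argument is the same as the paper's and is fine. For monotonicity you have taken a genuinely different route from the paper --- and in doing so you have put your finger on a real subtlety --- but your proposed fix does not close the gap you yourself identified. The paper's own proof is a one-liner: the integrand is nonnegative, hence $e_t$ is increasing in $t$. That reasoning is only valid when the integrand is independent of (or nondecreasing in) the upper limit $t$; here the integrand $\frac{\tau^2}{2}\bar R^*(\tau)E(\tau,t)$ is \emph{nonincreasing} in $t$ for fixed $\tau$, since $E(\tau,t)$ decays as $t$ grows (the opposite of the situation in Lemma~\ref{aSt-inc}, where the $t$-dependence of $E$ pushes the integrand $1-K_*E$ upward). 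Your ODE
\[
\frac{d e_t}{dt}=\frac{t^2}{4}\bar R^*(t)-\frac{t\,|M|^{*2}(t)}{2}\,e_t ,\qquad e_1=0,
\]
is correct and makes the tension explicit.

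The step that fails is the last one: neither the exponential decay of $|M|$ nor the a priori bound $e_t\le\frac14\int_1^\infty\tau^2\bar R^*(\tau)\,d\tau$ forces the source term $\frac{t^2}{4}\bar R^*(t)$ to dominate the dissipative term $\frac{t}{2}|M|^{*2}(t)\,e_t$ at \emph{each} $t$. If, for instance, $\bar R^*$ is supported in $[1,2]$ while $|M|^{*2}(t)>0$ for some $t>2$ (which happens whenever $g_1$ is not already round), then $e_t>0$ and $\frac{d}{dt}e_t=-\frac{t}{2}|M|^{*2}(t)\,e_t<0$ there, so $e_t$ is strictly decreasing on that stretch. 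What your integrating-factor computation \emph{does} prove is the quasi-monotonicity $e_{t_2}\ge E(t_1,t_2)\,e_{t_1}$ for $t_1\le t_2$, i.e.\ that $A(t)e_t$ is nondecreasing; since $E(t_1,\infty)>0$ (the exponent is finite by the exponential decay of $|M|$), this weaker statement already suffices for everything the lemma is used for downstream: the existence of the limit in Lemma~\ref{e-finite}, and the implication $e=0\Rightarrow e_t\equiv 0\Rightarrow\bar R^*\equiv 0$ in Proposition~\ref{e=0}. So either restate the conclusion as monotonicity of $A(t)e_t$ (equivalently $e_{t_2}\ge E(t_1,t_2)e_{t_1}$), or supply a hypothesis guaranteeing $\frac{t^2}{4}\bar R^*(t)\ge\frac{t}{2}|M|^{*2}(t)\,e_t$ pointwise; as written, the unconditional claim that $e_t$ itself is increasing is not established by your argument, and the paper's one-line justification does not establish it either.
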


\begin{proof}

Recall that 
\be
e_t(M_{\bar{R}}, g_{\bar{R}})=
\frac{1}{2}\int_{1}^{t}\frac{\tau^{2}}{2}\bar{R}^{*}(\tau)
E(\tau,t)
d\tau.
\ee
Given $\bar{R} \geq 0$, the integrand of $e_t(M_{\bar{R}}, g_{\bar{R}})$
 is nonnegative. Hence, $e_t(M_{\bar{R}}, g_{\bar{R}})$ is nonnegative and increasing in $t$. 
\end{proof}

\begin{lem}\label{e-finite}
Given $\bar{R} \geq 0$ such that 
\be \label{hyp-e-finite}
\int_1^{\infty}  |\bar{R}|^*t^2 dt < \infty,
\ee
 we see that
the limit $e(M_{\bar{R}},g_{\bar{R}})$ in (\ref{eqn-e})
exists and is finite.
\end{lem}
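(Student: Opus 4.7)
The plan is to combine Lemma~\ref{et-inc}, which gives that $e_t(M_{\bar{R}}, g_{\bar{R}})$ is nondecreasing in $t$ when $\bar{R}\ge 0$, with a uniform upper bound derived from the decay hypothesis (\ref{hyp-e-finite}). Once both are in hand, the monotone convergence (monotone sequence) theorem immediately produces a finite limit.

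First I would observe that since $|M|^{*2}(s)\ge 0$, the exponent in
\[
E(\tau,t)=\exp\!\left(-\int_{\tau}^{t}\frac{s\,|M|^{*2}(s)}{2}\,ds\right)
\]
is nonpositive, so $0<E(\tau,t)\le 1$ for all $1\le \tau\le t$. Using this bound together with $\bar{R}^*(\tau)\le |\bar{R}|^*(\tau)$ gives
\[
e_t(M_{\bar{R}},g_{\bar{R}})
=\frac{1}{2}\int_{1}^{t}\frac{\tau^{2}}{2}\bar{R}^{*}(\tau)\,E(\tau,t)\,d\tau
\le \frac{1}{4}\int_{1}^{t}\tau^{2}\,|\bar{R}|^{*}(\tau)\,d\tau
\le \frac{1}{4}\int_{1}^{\infty}\tau^{2}\,|\bar{R}|^{*}(\tau)\,d\tau,
\]
which is finite by the hypothesis (\ref{hyp-e-finite}). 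This bound is uniform in $t$.

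Finally I would invoke Lemma~\ref{et-inc}: the map $t\mapsto e_t(M_{\bar{R}},g_{\bar{R}})$ is nondecreasing and bounded above, so by the monotone convergence theorem the limit
\[
e(M_{\bar{R}},g_{\bar{R}})=\lim_{t\to\infty} e_t(M_{\bar{R}},g_{\bar{R}})
\]
exists and satisfies $0\le e(M_{\bar{R}},g_{\bar{R}})\le \tfrac{1}{4}\int_{1}^{\infty}\tau^{2}|\bar{R}|^{*}(\tau)\,d\tau<\infty$.

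There is essentially no obstacle here; the only subtle point is remembering that the sign convention in the definition of $E(\tau,t)$ makes it bounded by $1$ rather than by something growing in $t$, so that the decay hypothesis on $\bar{R}$ can be applied directly without having to control $|M|^*$ at all.
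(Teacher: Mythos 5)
Your proposal is correct and follows essentially the same route as the paper: monotonicity from Lemma~\ref{et-inc}, the bound $E(\tau,t)\le 1$ to reduce to the integral in (\ref{hyp-e-finite}), and the monotone sequence theorem. No issues.
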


\begin{proof}
Recall that
\be
e(M_{\bar{R}},g_{\bar{R}}) 
= \lim_{t\to\infty} e_t(M_{\bar{R}}, g_{\bar{R}}).
\ee
With  $\bar{R} \geq 0$, the integrand
of $e_t(M_{\bar{R}}, g_{\bar{R}})$ is nonnegative, so $e_t(M_{\bar{R}}, g_{\bar{R}})$ is increasing in $t$. Moreover, since 
\be
E(\tau,t)=\exp\left(-\int_{\tau}^{t}\frac{s\left|M\right|^{*2}(s)}{2}ds\right) \leq 1,
\ee
 applying (\ref{hyp-e-finite}) we have 
\be
e_t(M_{\bar{R}}, g_{\bar{R}})
\leq \frac{1}{2}\int_{1}^{t}\frac{\tau^{2}}{2}\bar{R}^{*}(\tau)d\tau <\infty.
\ee
Therefore, $e_t(M_{\bar{R}}, g_{\bar{R}})$ is increasing and bounded in $t$, and hence the limit $e(M_{\bar{R}},g_{\bar{R}})$ exists and is finite by the monotonic sequence theorem. 
\end{proof}

We now prove Theorem~\ref{thm-M-R}:

\begin{proof}
By the assumptions, \textcolor{blue}{Lemma~\ref{K-R}} and Theorem \ref{simplify} provides an unique admissible extension $M_{\bar{R}}=[1,\infty) \times \Sigma$  with prescribe scalar curvature $\bar{R}$ is obtained. There exists an unique solution $u \in C^{2+\alpha}(M_{\bar{R}})$ with initial condition $u(1,\cdot)=2/H$ such that the metric 
$$
g_{\bar{R}}=u^2dt^2 + t^2 g_t
$$ satisfies the asymptotically flat condition and finite ADM mass and 
\be
\mathrm{m}_{\mathrm{ADM}} (M_{\bar{R}}) = \lim_{t\rightarrow \infty} \frac{1}{4\pi} \oint_{\Sigma_t} \frac{t}{2}(1-u^{-2})d\sigma.
\ee
Applying the parabolic maximum principle to the parabolic equation of $u^{-2}$ (Lemma 10 in \cite{Lin13})\footnote{This is the source of the error in our original publication.  We were missing the 
second term  in (\ref{Hyun-Chul}).}, we have the following $C^0$ bound:
\be\label{Hyun-Chul}
u^{-2}\left(t\right)\geq \frac{1}{t}\int_{1}^{t}\left(K-\frac{\tau^{2}}{2}\bar{R}\right)_{*}E(\tau,t)d\tau +\frac{1}{t}u^{-2}(1) {\color{blue} E(1,t) }.
\ee
{\color{blue}
Using the following two formulas
\begin{equation*}
\frac 1t \int_1^t 1 d\tau = 1 - \frac 1t
\end{equation*}
and
\begin{eqnarray*}
\frac{ 1}{t} \int_1^t \frac{ \tau |M|^{*2}}{2} E(\tau,t) d\tau 
&=& \frac{ 1}{t} \int_1^t \frac{\partial}{\partial \tau} E(\tau,t) d\tau \\
&=&  \frac{1}{t} \left( 1 - E(1,t) \right),
\end{eqnarray*}
we have
\begin{eqnarray*}
1 = \frac{1}{t} \int_1^t 1 d\tau + \frac {1}{t} \int_1^t \frac{\tau |M|^{*2}}{2} E(\tau,t) d\tau + \frac{1}{t} E(1,t).
\end{eqnarray*}
}

By direction computation,
\begin{eqnarray*}
1-u^{-2} 
 & \leq & 1-\frac{1}{t}\int_{1}^{t}\left(K-\frac{\tau^{2}}{2}\bar{R}\right)_{*}E(\tau,t)d\tau-\frac{H^2}{4t} {\color{blue} E(1,t) }.\\\
 & \leq & 1-\frac{1}{t}\int_{1}^{t}\left(K_*-\frac{\tau^{2}}{2}\bar{R}^{*}\right)E(\tau,t)d\tau-\frac{H^2}{4t} {\color{blue} E(1,t) }.\\
 & = & \frac{1}{t}\int_{1}^{t}1-K_*E(\tau,t) {\color{blue} + \frac {\tau |M|^{*2}}{2} E(\tau,t)}d\tau+\frac{1}{t}\int_{1}^{t}\frac{\tau^{2}}{2}\bar{R}^{*}E(\tau,t)d\tau+\frac{1}{t}\left(1-\frac{H^2}{4}\right){\color{blue} E(1,t) }.
\end{eqnarray*}
Also, the Hawking mass of $\Sigma$ is given by the formula
\be
\mathrm{m}_{\mathrm{H}}(\Sigma) = \sqrt{\frac{A(\Sigma)}{16\pi}}\left(1 - \frac{1}{16\pi}\int_{\Sigma} H^2 d\sigma \right)
= \frac{1}{4\pi} \int_{\Sigma} \frac{1}{2} \left( 1- \frac{H^2}{4} \right) d\sigma.
\ee
Therefore, 
\begin{eqnarray*}
\frac{1}{4\pi} \oint_{\Sigma_t} \frac{t}{2}(1-u^{-2})d\sigma 
&\leq& \frac{1}{2} \int_{1}^{t}1-K_*E(\tau,t) {\color{blue} + \frac {\tau |M|^{*2}}{2} E(\tau,t)} d\tau+\frac{1}{2}\int_{1}^{t}\frac{\tau^{2}}{2}\bar{R}^{*}E(\tau,t)d\tau + \mathrm{m}_{\mathrm{H}}(\Sigma){\color{blue} E(1,t) } \\
&=& m_{a\mathbb{S}}(\Sigma,t) + e_t(M_{\bar{R}}, g_{\bar{R}}) + \mathrm{m}_{\mathrm{H}}(\Sigma){\color{blue} E(1,t) }, 
\end{eqnarray*}
and 
\begin{eqnarray}
\mathrm{m}_{\mathrm{ADM}}(M_{\bar{R}}) 
&\leq& m_{a\mathbb{S}}(\Sigma)  +\mathrm{m}_{\mathrm{H}}(\Sigma){\color{blue} \lim_{t\to \infty}E(1,t)}+ e(M_{\bar{R}}, g_{\bar{R}}) \nonumber\\
&\leq &m_{a\mathbb{S}}(\Sigma)  +\mathrm{m}_{\mathrm{H}}(\Sigma) + e(M_{\bar{R}}, g_{\bar{R}})
\end{eqnarray}
{\color{blue} since $E(1,t) \leq 1$}.
It follows directly by the definition of the Bartnik mass that 
\be
\mathrm{m}_{\mathrm{B}}(\Sigma) \leq m_{a\mathbb{S}}(\Sigma)  + \mathrm{m}_{\mathrm{H}}(\Sigma)+ e(M_{\bar{R}}, g_{\bar{R}}).
\ee
\end{proof}

\section{Proving the Main Theorem}\label{sect-0}
In order to complete the proof of Theorem~\ref{thm-m-B} \textcolor{blue}{we first construct an
extension with prescribed scalar curvature $\bar{R}=0$ using Lemma~\ref{positive-0}, so}
we need only prove the following proposition and
combine it with Theorem~\ref{thm-M-R}:

\begin{prop}\label{e=0}
Given $\bar{R} \geq 0$, we have
$e(M_{\bar{R}},g_{\bar{R}})=0$ if and only if prescribed $\bar{R}=0$.   
\end{prop}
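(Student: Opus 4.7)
The plan is to prove both directions more or less directly from the structure of the integrand in $e_t$, using the monotonicity and finiteness results already established in Lemmas~\ref{et-inc} and \ref{e-finite}.

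The forward direction is immediate: if $\bar R\equiv 0$, then $\bar R^*(\tau)=0$ for every $\tau\geq 1$, so the integrand defining $e_t(M_{\bar R},g_{\bar R})$ vanishes identically, $e_t\equiv 0$, and hence $e(M_{\bar R},g_{\bar R})=0$.

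For the reverse direction, I would suppose $e(M_{\bar R},g_{\bar R})=0$ and argue as follows. By Lemma~\ref{et-inc}, the map $t\mapsto e_t(M_{\bar R},g_{\bar R})$ is nonnegative and nondecreasing; since its limit as $t\to\infty$ equals $0$, it must be identically zero for all $t\geq 1$. Fixing an arbitrary $t>1$, this gives
\[
\int_{1}^{t}\frac{\tau^{2}}{2}\,\bar R^{*}(\tau)\,E(\tau,t)\,d\tau=0.
\]
The integrand is nonnegative because $\bar R\geq 0$ and $E(\tau,t)>0$. It is also continuous in $\tau$ on $[1,t]$: the H\"older regularity of $\bar R$ together with compactness of $\Sigma$ makes $\tau\mapsto\bar R^{*}(\tau)$ continuous, and $\tau^{2}$ and $E(\tau,t)$ are manifestly continuous. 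A nonnegative continuous function with vanishing integral over $[1,t]$ must be identically zero there, so $\tau^{2}\bar R^{*}(\tau)E(\tau,t)\equiv 0$ on $[1,t]$. Since $\tau^{2}>0$ and $E(\tau,t)>0$, this forces $\bar R^{*}(\tau)=0$ for every $\tau\in[1,t]$; letting $t\to\infty$ yields $\bar R^{*}(\tau)=0$ for all $\tau\geq 1$. Finally, because $\bar R\geq 0$ and $\bar R^{*}(\tau)=\sup_{x\in\Sigma}\bar R(\tau,x)$, we conclude $\bar R(\tau,x)=0$ pointwise on $[1,\infty)\times\Sigma$.

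Honestly, there is no serious obstacle here: the argument is a straightforward consequence of nonnegativity, monotonicity, and strict positivity of the exponential weight $E(\tau,t)$. The only minor point to be careful about is the regularity needed to upgrade "integral is zero" to "integrand is pointwise zero," and this is handled by the continuity of $\bar R^{*}$ that follows from the assumed $C^{\alpha}$ regularity of $\bar R$ and compactness of $\Sigma$. Combining this proposition with Theorem~\ref{thm-M-R} then yields Theorem~\ref{thm-m-B}, since $\bar R=0$ reduces the admissible extension's ADM mass estimate to $m_{a\mathbb S}(\Sigma)+\mathrm{m}_{\mathrm H}(\Sigma)$.
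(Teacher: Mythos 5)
Your proposal is correct and follows essentially the same route as the paper: both directions rest on the nonnegativity and monotonicity of $e_t$ from Lemma~\ref{et-inc}, forcing $\bar R^*(\tau)=0$ and hence $\bar R=0$ since $0\le\bar R\le\bar R^*$. The only difference is that you spell out the step the paper leaves implicit --- that a vanishing integral of a nonnegative integrand forces the integrand to vanish pointwise, justified via continuity of $\bar R^*$ from the $C^\alpha$ regularity and compactness of $\Sigma$ --- which is a welcome clarification but not a different argument.
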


\begin{proof}
Suppose $\bar{R}=0$. It is clear that, by the definition, 
\be
e(M_{\bar{R}},g_{\bar{R}}) = \lim_{t\rightarrow \infty} \frac{1}{2}\int_{1}^{t}\frac{\tau^{2}}{2}\bar{R}^{*}(\tau)e^{-\int_{\tau}^{t}\frac{s\left|M\right|^{*2}(s)}{2}ds}d\tau =0.
\ee
Suppose that $\bar{R} \geq 0$ and $e(M_{\bar{R}},g_{\bar{R}})=0$. Since $e_t(M_{\bar{R}},g_{\bar{R}})$ is nonnegative and increasing in $t$ by Lemma \ref{et-inc}. It follows that $\bar{R}^*(\tau)=0$ for all $\tau$. We therefore conclude that $\bar{R}=0$ since $0=\bar{R}^* \geq \bar{R} \geq 0$. 
\end{proof}

\section{Rigidity and Monotonicity of the Hawking mass}\label{section-mH}

Here we derive a monotonicity formula for Hawking mass which was already known in \cite{Lin13} and then prove Theorem \ref{thm-min}
. 

For  $M_{\bar{R}}=[1,\infty)\times\Sigma$
equipped with the metric 
\be
g_{\bar{R}}=u^{2}dt^{2}+t^{2}g,
\ee
where $g = g_t$ is the solution of the modified Ricci flow. Observe that the mean curvature of a level set of $t$,
$\Sigma_t$, is
\be
H_t(x)= 2/u(t,x).
\ee
Therefore, the Hawking mass (see \cite[Theorem 13]{Lin13}) is given by
\begin{eqnarray}
\mathrm{m}_{\mathrm{H}}(\Sigma_t)
&=& \sqrt{\frac{area\left(\Sigma\right)}{16\pi}}\left(1-\frac{1}{16\pi}\oint_{\Sigma}H^{2}d\sigma \right) \\
&=& \frac{1}{4\pi}\oint_{\Sigma_{t}}\frac{t}{2}(1-u^{-2}(t,x))d\sigma.
\end{eqnarray}

From the Gauss-Bonnet Theorem and (\ref{eq:RFu}), we have the following monotonicity formula provided $\bar{R}\geq 0$. 
\begin{eqnarray}
\frac{d}{dt}\mathrm{m}_{\mathrm{H}}(\Sigma_t) 
&=& \frac{1}{4\pi} \oint_{\Sigma_t} \frac{1}{2}u^{-1}\Delta u + \frac{t^2}{4}|M|^2 u^{-2}+ \frac{t^2}{4}\bar{R} +\left( \frac{1}{2} - \frac{K}{2} \right)d\sigma \\
&=& \frac{1}{8\pi} \oint_{\Sigma_t} \frac{|\nabla u|^2}{u^2} + \frac{t^2}{2} |M|^2u^{-2} + \frac{t^2}{2}\bar{R} d\sigma.\label{mH inc}
\end{eqnarray}

We now prove Theorem~\ref{thm-min}:

\begin{proof}
By the assumptions and Theorem \ref{thm: existence}, an admissible extension $M_{\bar{R}}$ exists and the ADM mass can be obtained by
\be
\mathrm{m}_{\mathrm{ADM}}(M_{\bar{R}}) = \lim_{t \rightarrow \infty} \mathrm{m}_{\mathrm{H}}(\Sigma_t). 
\ee
Given $\bar{R} \geq 0$, $\mathrm{m}_{\mathrm{H}}(\Sigma_t)$ is increasing by the monotonicity formula (\ref{mH inc}). $\mathrm{m}_{\mathrm{ADM}}(M_{\bar{R}}) =\mathrm{m}_{\mathrm{H}}(\Sigma)$ implies that $\frac{d}{dt}\mathrm{m}_{\mathrm{H}}(\Sigma_t)=0$. Hence $\bar{R}=0$, $|M| =0$, and $\nabla u=0$. Since $|M|=0$, 
then $\Sigma$ is isometric to a standard sphere by \cite{Ham88}.  
Since $\nabla u=0$, we have $u(x,t)=u(t)$, so $H$ is
constant and $M_{\bar{R}}$
is rotationally symmetric.   Since $\bar{R}=0$ if $\mathrm{m}_{\mathrm{H}}=m\ge 0$
then $M_{\bar{R}}$ is isometric to a rotationally symmetric
region in $M_{Sch}$ of mass $m$ or Euclidean space  (c.f. Lemma~\ref{app-lem}).   
\end{proof} 

\section{Open Questions}

There are many theorems proven for the rotationally symmetric 
classes of spaces with nonnegative scalar curvature.   It would be
interesting to extend these results to the class of spaces $Ham^0$:

\begin{quest}
What can be said about the 
vacuum solutions of the Einstein equation
which have initial data sets foliated by Hamilton's
modified Ricci flow?
\end{quest}

\begin{quest}
If one fixes $(\Sigma, g_1, H)$, what can be
said about sequences of $M_j \in \Ham^0(\Sigma, g_1, H)$
assuming $\mathrm{m}_{\mathrm{ADM}}(M_j)\le m_0$?
\end{quest}

\section{Appendix on Rotationally Symmetric Spaces}

The following lemma was already basically understood in the
rotationally symmetric setting and is proven here using our
notation for completeness of exposition:

\begin{lem}\label{app-lem}
Given $(\Sigma, g_1)$ isometric to a rescaled standard
sphere and $H>0$ constant and $\bar{R}=0$ and $\mathrm{m}_{\mathrm{H}}(\Sigma)=m\ge 0$ and assume
$\Sigma$ is the boundary of a region $\Omega\subset M$
where $M\subset \mathcal{PM}$
, then
$(M_{\bar{R}}, g_{\bar{R}})$ is a rotationally symmetric
region in a Schwarzschild
space or in Euclidean space with metric:
\be
\bar{g} = \frac{1}{1-2m/t}dt^2 + t^2 g_{\mathbb{S}^2}.
\ee
Since Hawking mass is constant in a Schwarzschild space
we have 
\be\label{m_B-m_H}
\mathrm{m}_{\mathrm{B}}(\Sigma) \le \mathrm{m}_{\mathrm{H}}(\Sigma).
\ee
In particular  $\mathrm{m}_{\mathrm{H}}(\Sigma)\ge 0$,
which implies $H\le 2$.    
\end{lem}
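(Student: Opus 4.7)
The plan is to exploit the fact that the round sphere is a fixed point of Hamilton's modified Ricci flow, which forces all the PDE data to be rotationally symmetric and so reduces everything to a classical ODE problem.

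First, I would observe that since $(\Sigma,g_1)$ is a standard sphere of area $4\pi$, we have $K \equiv 1$, the Ricci potential $f\equiv 0$, and hence $M_{ij} \equiv 0$ at $t=1$. By the uniqueness clause of Hamilton's theorem (recalled at the beginning of Section~\ref{sect-review}), $g_t \equiv g_{\mathbb{S}^2}$, $K_t \equiv 1$ and $M_t \equiv 0$ for all $t \ge 1$. Substituting these plus $\bar R = 0$ into (\ref{eq:RFu}) collapses the equation for $u$ to $t\,u_t = \tfrac12 u^2 \Delta u + \tfrac12 u(1 - u^2)$, with constant initial data $u(1,\cdot) = 2/H$. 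Since both this equation and its initial data are $SO(3)$-invariant, uniqueness in Theorem~\ref{simplify} applied to pullbacks under rotations forces $u = u(t)$ alone; hence $g_{\bar R} = u(t)^2 dt^2 + t^2 g_{\mathbb{S}^2}$ is rotationally symmetric with vanishing scalar curvature.

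Next, a short ODE computation (or equivalently Birkhoff's theorem) identifies any rotationally symmetric metric of this form with $\bar R \equiv 0$ as a Schwarzschild piece $u^{-2}(t) = 1 - 2\tilde m/t$ for some constant $\tilde m$, which is flat when $\tilde m = 0$. Matching the initial condition at $t=1$ gives $1 - 2\tilde m = H^2/4$, i.e., $\tilde m = \tfrac12(1 - H^2/4) = \mathrm{m}_{\mathrm{H}}(\Sigma) = m$. Provided $m \ge 0$, this Schwarzschild (or Euclidean) extension lies in $\mathcal{PM}$: its scalar curvature is identically $0$, its boundary mean curvature at $\Sigma_1$ equals the prescribed $H > 0$, no closed interior minimal surfaces occur outside the horizon $t = 2m$ (and $1 > 2m$ is equivalent to $H > 0$), and the boundary data match by construction. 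Hawking mass is constant on rotationally symmetric spheres of Schwarzschild and approaches the ADM mass, so $\mathrm{m}_{\mathrm{ADM}}(M_{\bar R}) = m = \mathrm{m}_{\mathrm{H}}(\Sigma)$, which yields (\ref{m_B-m_H}). Finally, the identity $m = \tfrac12(1 - H^2/4)$ trivially gives $H \le 2$ whenever $m \ge 0$.

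The main obstacle I anticipate is justifying that rotational symmetry really propagates from the initial data through the parabolic equation to the solution $u$; this reduces to pulling back $u$ by an arbitrary element of $SO(3)$ (which gives another solution with the same initial data) and invoking the uniqueness part of Theorem~\ref{simplify}. Verifying that the Schwarzschild extension genuinely lies in $\mathcal{PM}$ (positivity of $u$, absence of interior minimal surfaces, outward minimizing horizon when $m>0$) is standard. The remaining ODE identification and constant matching are routine and should not require substantial work.
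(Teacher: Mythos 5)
Your proof is correct and follows essentially the same route as the paper: reduce to a rotationally symmetric metric $u^2(t)\,dt^2+t^2g_{\mathbb{S}^2}$, use $\bar{R}=0$ to force the Hawking mass of the level sets to be constant equal to $m$, and solve $m=\tfrac{t}{2}(1-u^{-2}(t))$ to identify the Schwarzschild/Euclidean form. The only difference is that you explicitly justify the propagation of rotational symmetry via uniqueness under $SO(3)$ pullbacks and verify membership in $\mathcal{PM}$, steps the paper's terser proof takes for granted.
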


\begin{rmk}
Observe that Shi-Tam have proven in \cite{ShiTam02}
that
\be
\int_{\Sigma} H \, d\sigma \le \int_{\Sigma} H_0 d\sigma
\ee
which in the constant mean curvature case implies
\be
4\pi H \le   \int_{\Sigma} H_0 d\sigma
\ee
and so 
\begin{eqnarray}
\mathrm{m}_{\mathrm{H}}\left(\Sigma\right)&=&\sqrt{\frac{1}{4}}\left(1-\frac{1}{4}H^2 \right)\\
&\ge &\sqrt{\frac{1}{4}}\left(1-\frac{1}{16\pi}
\left(\int_{\Sigma} H_0 d\sigma\right) ^2 \right).
\end{eqnarray}
Furthermore in the rotationally symmetric case $H_0=2$, so
$
4\pi H \le 8\pi
$
and $H\le 2$ just as concluded above.
\end{rmk}

\begin{proof}
Consider
\be 
\bar{g} = u^2(t) dt^2 + t^2g_{\mathbb{S}^2}
\ee
Then 
$\mathrm{m}_{\mathrm{H}}(t)= \frac{t}{2} \left(1 - u^{-2}(t)\right)$ equals the Hawking mass of $\Sigma_t$.     

Observe that $\bar{R}=0$ implies $\mathrm{m}_{\mathrm{H}}'(t)=0$ which
implies $\mathrm{m}_{\mathrm{H}}(t)=m$.   This
\be
m=\frac{t}{2} \left(1 - u^{-2}(t)\right)
\ee
and so
\be
u^2(t)= \frac{1}{1-2m/t}.
\ee
\end{proof}




\end{document}